\definecolor{linkcolor}{RGB}{83,83,182}
\definecolor{citecolor}{RGB}{128,0,128}
\newcommand{\abs}[1]{\lvert #1 \rvert}
\newcommand{\ie}{{\em i.e.,~}}
\newtheorem{theorem}{Theorem}[section]
\newtheorem{proposition}[theorem]{Proposition}
\newtheorem{corollary}[theorem]{Corollary}
\newtheorem{definition}[theorem]{Definition}
\newtheorem{assumption}[theorem]{Assumption}
\begin{document}

%

%

\twocolumn[

\aistatstitle{Provable local learning rule by expert aggregation for a Hawkes network}

\aistatsauthor{ Sophie Jaffard$^1$ \And Samuel Vaiter$^1$ \And  Alexandre Muzy$^2$ \And Patricia Reynaud-Bouret$^1$ }

\aistatsaddress{ $^1$Université Côte d'Azur, CNRS, LJAD \And $^2$Université Côte d'Azur, CNRS, i3S } ]

\begin{abstract}
  We propose a simple network of Hawkes processes as a cognitive model capable of learning to classify objects. Our learning algorithm, named HAN for Hawkes Aggregation of Neurons, is based on a local synaptic learning rule based on spiking probabilities at each output node. We were able to use local regret bounds to prove mathematically that the network is able to learn on average and even asymptotically under more restrictive assumptions.
\end{abstract}

\section{INTRODUCTION}
Recordings of human brain suggest that concepts are represented through sparse set of neurons that fire when the concept is activated \citep{legenstein2016variable}. In this sense, Spiking Neural Networks \citep{TAVANAEI201947} are more biologically plausible than Artificial Neural Networks if one wants to understand how the brain encodes information at neuronal level, and stochastic modelling is particularly relevant \citep{Buesing2011}. 

Neuroscientists have identified local learning rules to adjust synaptic weights, regrouped in the concept of  Spike-Timing-Dependent Plasticity (STDP) process~\citep{TAVANAEI201947,caporale, NIPS2009_a5cdd4aa}. This is a form of Hebbian learning \citep{hebb}, where connections between neurons are strengthened or weakened depending on their relative spike times in a short time-window. Other biological rules have been used, for instance to model the olfactory system \citep{kepple2019deconstructing}. However, to our knowledge there is no mathematical proof that such local rules enable to learn. If there was, it would help in understanding how local transformations can lead to global learning. 

Hawkes processes \citep{Hawkes1971} are point processes that are frequently used as models in a variety of settings: network analysis, financial transactions, seismic or health data \citep{hall2016tracking,zuo2020transformer}. In particular, a classic application consists in modeling  interactions between neurons \citep{reynaud2013inference,lambert2018reconstructing}. Many works deal with estimation in these models \citep{yang2017online,wang2020uncertainty}, sometimes using  recurrent neural networks \citep{sharma2019generative,zhang2020self}. Simulation of
large networks of these processes is also widely studied in the literature  \citep{bacry2017tick,phi2020event,mascartetal2022,10.1145/3565809}.
Generalizations of these interaction models
 have also been studied for estimation purposes
using deep networks \citep{mei2017neural,zuo2020transformer}.

Our purpose in the present work is totally different from estimation or simulation. As a first step towards proving mathematically that bio-inspired networks using local learning rules can learn, we use Hawkes networks as a model for a cognitive network that can provably learn to classify objects into one of several categories by updating synaptic weights with a local learning rule. See an illustrative example in Figure \ref{net}. In this network, the output nodes are post-synaptic neurons that  produce spikes as a discrete-time Hawkes process \citep{ost2020sparse, bremaud1996}, whose spiking probability is a function of the weighted sum of the activity of the pre-synaptic neurons at the previous time step. In the case of a linear Hawkes process, Kalikow decomposition \citep{ost2020sparse} allows us to interpret these synaptic  weights in the previous sum as a probability distribution. In particular, it is possible to  randomly choose  the presynaptic neuron of interest instead of doing the whole sum over all presynaptic neurons.

\begin{figure} \label{net}
   \hspace{-0.5cm}
    \centering
    \includegraphics[width=5cm]{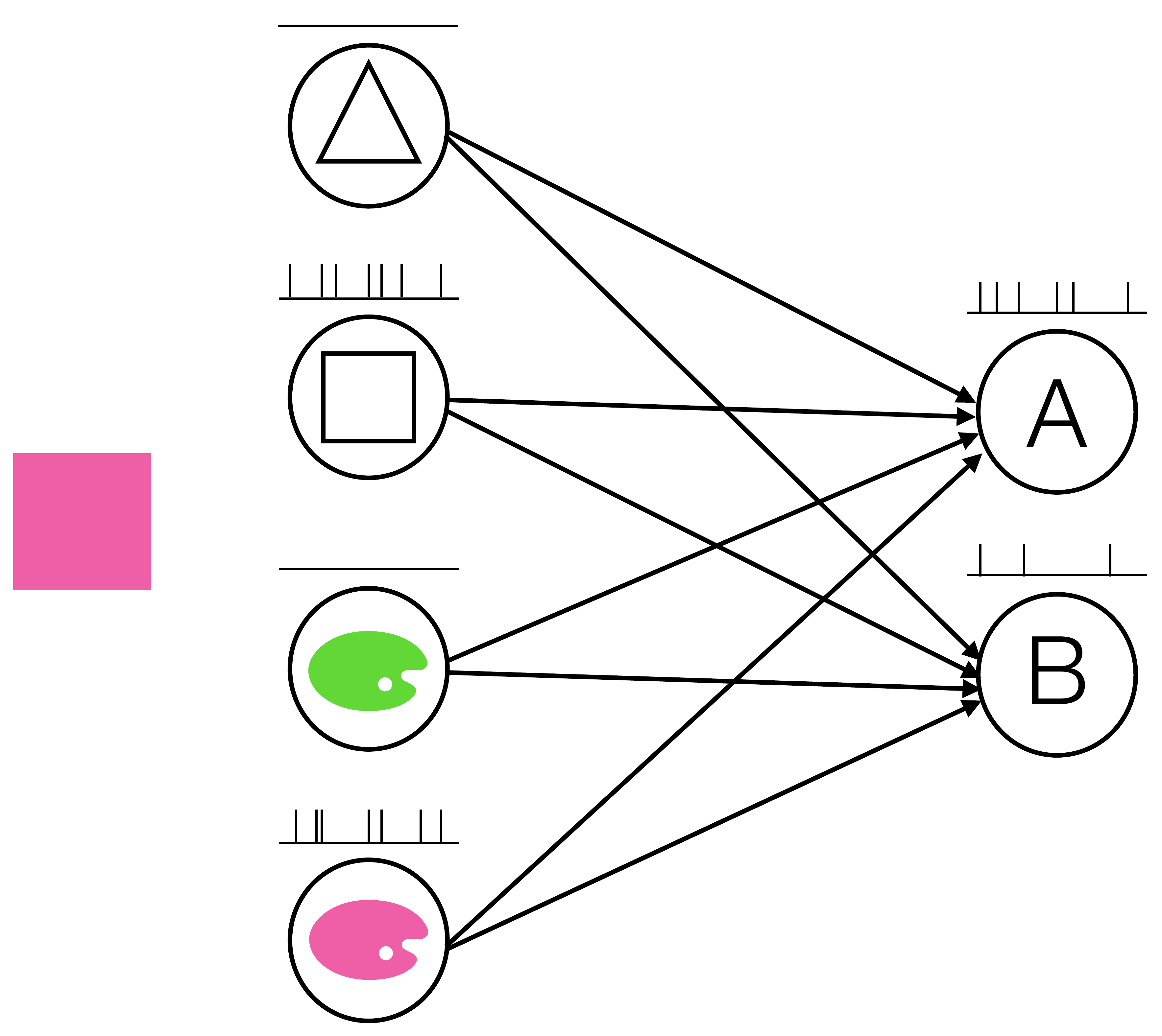}
    \caption{Illustrative example of the network. The presented object excites the neurons encoding its features. Then it is classified in the class coded by the output neuron which spiked the most, here class $A$. }
    \label{curves}
\end{figure}

This interpretation of the synaptic weights leads to the following local vision: for an output neuron, its presynaptic neurons can be seen as so many experts and the distribution, given by the weights, can be related to an expert aggregation problem. This is why we use at this stage an expert aggregation algorithm \citep{cesa2006prediction}, or also known as mixture of experts (MoE), to update the weights. However, the key ingredient is that gains of presynaptic neurons are not arbitrary, as usual in expert aggregation, but are selected depending on their spiking probability. 

The resulting algorithm is called HAN (Hawkes Aggregation of Neurons), and is general enough for any expert aggregation algorithm.
\newline

\textbf{Contributions.} We propose a Hawkes network that learns to classify objects with a local learning rule. More precisely, our contributions are the following:\\
$\bullet$ We interpret the optimization of synaptic weights as small \textbf{expert aggregation} problems that are solved locally by each output neuron, see Algorithm \ref{waca} (HAN).\\
$\bullet$ In the case of a linear Hawkes process, we prove that the network learns to correctly classify objects on average for any expert aggregation algorithm verifying a certain regret bound (Theorem \ref{theorem oracle}). More precisely, we have an \textbf{oracle inequality} (with constant 1): the obtained network has the same {\bf network discrepancy} in spiking probability as the best possible network up to an additive error in $O(M^{-1/2})$, where $M$ is the number of objects presented to the network during its learning phase.\\
$\bullet$ In the case of a general Hawkes process, we explicitly compute the limit of the weights when using the specific expert aggregation algorithm EWA (Exponentially Weighted Average) \citep{cesa1999prediction,cesa2006prediction,stoltz2010agregation}. 
\newline

\textbf{Related Work. } The proposed network is inspired by the Component-Cue model \citep{gluck1988conditioning}. In this cognitive model, the objects classified by the network have several features, and the network learns to classify them in the right category by learning combinations of features which  predict correctly the category. The features of the objects to classify are represented by input nodes and their categories by output nodes. \cite{MEZZADRI2022102691, mezzadri:tel-03219311} compared this model to the ALCOVE model \citep{kruschke2020alcove}, where objects are classified according to their similarities with previously learned objects, and showed that the Component-Cue model is most of the time a better fit to human learning than the ALCOVE model. At the difference with the present work, the original Component-Cue model does not incorporate firing patterns of neurons nor local learning rule.

Kalikow decomposition has been mainly used  to prove existence of stationary processes \citep{galves2013infinite,galves2013kalikow,hodara2017hawkes}. Recently it has been used \citep{phi2020event} to simulate neurons in interaction with a potentially infinite  neural network.

Online learning in a context of a Hawkes network has been used by \cite{hall2016tracking}, where a dynamic mirror descent is performed to track how events influence future events, and by \cite{yang2017online}, to estimate the triggering functions of the processes. However, in these works, online learning has been used to estimate the parameters of the Hawkes processes, whereas in the present work, online learning is used to update synaptic weights to make the Hawkes network learn how to classify objects by itself.

In neuroscience, two main local synaptic rules have been proposed to link a behavior to a corresponding synaptic mechanism and STDP is not one of them. 
The three-factors rule~\citep{three-factor} assumes that a synaptic weight update depends on (i) the presynaptic activation, (ii) the post-synaptic activation, and (iii) the eventual outcome of the overall behavior. The rate-based learning rule~\citep{rate-based-rule} assumes that the weight update depends on the firing rate of pre- and post-synaptic neurons. Our local rule is closer to this approach than to STDP. To the best of our knowledge, the present work mathematically proves for the first time that such local learning rules make a very simple network learn.


\section{FRAMEWORK}

\subsection{First notations and set-up}

All the notations are listed in Appendix \ref{app notations}. The objects to be classified have different natures $o\in \mathcal{O}$, each $o$ having different features, each feature being the version of a general characteristic. For instance, a blue square can have the feature "blue" which characteristic is color.

We work in discrete time. A number $M$ of objects are successively presented to the network, each for $N$ time steps. We denote by $o(m)\in \mathcal{O}$ the nature of the object presented to the network during the $m^{th}$ round. We want the network to learn to classify the objects in classes. For class $j$, $M^j$ is the number of objects  belonging to class $j$ in the first $M$ presented objects. 

The present network is made of two layers. We denote by $I$ the set of input neurons, and $J$ the set of 
output neurons. Each output neuron $j$ corresponds to a class, also noted $j$, in which one wants 
to classify the objects that are shown. Then the network activity is a sequence of random variables 
$(X^i_{m,t})_{i\in I\cup J, 1\leq m \leq M, 1\leq t \leq N}$ where
\[X^i_{m,t} = \left\{
    \begin{array}{ll}
    1 \text{ if neuron $i$ spiked at time $t$ for object $m$} \\
    0 \text{ otherwise.}
    \end{array}
\right. \]

\subsection{The input layer} \label{sec input}
Each neuron of the input layer emits a discrete process that is a sequence of independent random variables following a Bernoulli distribution (note that we assume temporal independence for one neuron but not independence between neurons). The parameter of the Bernoulli distribution is denoted
\[p^i_m:=\mathbb{P}(X^i_{m,1}=1),\]
hence $p^i_m$ is the spiking probability of neuron $i$ at any time step when presented with the $m^{th}$ object (see Step 4 of Algorithm \ref{waca}). We assume that $p^i_m$ does not depend on $m$ per see but only on eventually the nature $o(m)\in \mathcal{O}$. For instance, if $p^i_m$ corresponds to an input neuron $i$ detecting the feature "blue" then $p_m^i$ will be the same for all objects with the same feature blue. It could potentially be different for each $o$, but we are interested in practise by the case where input neurons describe objects through their features, not their nature, as in the Component-Cue model, which describes well human learning \citep{MEZZADRI2022102691}.

\subsection{The output layer} \label{sec output layer}

The output layer is made of neurons coding for the classes in which the objects are classified, so $J$ is used for both the set of output neurons and classes. 

The {\it rule for classifying the objects} is as follows: the object is classified in the class coded by the output neuron which spiked the most during its presentation, \ie in $\arg\max_{j\in J} \widehat{p^j_m}$ where $\widehat{p^j_m}:= \sum_{t=1}^N X_{m,t}^j/N$.

Each input neuron can impact output neuron $j$ via an inhibitory or excitatory connection. To make the distinction, we consider now the set of signed input neurons $I^{+/-}:= I\times \{+,-\}$. From now we denote $i^+=(i,+)\in I^{+/-}$ an excitatory connection and $i^-=(i,-)\in I^{+/-}$ an inhibitory connection. It is possible that a given $I$ has two connections with $j$ via its $i^+$ and $i^-$. These two presynaptic connections will be in this case considered as two experts for $j$. Let $I^j_+\subset I^{+/-}$ be all the excitatory presynaptic neurons of $j$ and $I^j_-\subset I^{+/-}$ be all the inhibitory presynaptic neurons of $j$ such that $I^j:=I^j_+\cup I^j_-$ is the set of connections and experts of $j$.

 We assume that input neurons start spiking $K$ time steps before output neurons. Then the conditional spiking probability of neuron $j$ at time $t$ of object $m$ knowing the network past activity is given by 
\begin{align}
    p^{j,\text{cond}}_{m,t}(w^j_m) := &\varphi\Big(\alpha^j + \sum_{i \in I^{j}_+} w^{i^+\to j}_m \sum_{k=1}^K g_+(k) X^i_{m,t-k} \nonumber \\
    &-\sum_{i \in I^{j}_-} w^{i^-\to j}_m \sum_{k=1}^K g_-(k) X^i_{m,t-k} \Big)  \label{Hawkes} 
\end{align}
where $\varphi : \mathbb{R} \mapsto \mathbb{R}$ is a Lipschitz function, $\alpha^j$ is the spontaneous activity of neuron $j$, $w^{i^+\to j}_m$ (resp. $w^{i^-\to j}_m$) is the weight of the excitatory (resp. inhibitory) connection from neuron $i$ to $j$, $w^j_m:=(w^{i^\bullet \to j}_m)_{i^\bullet \in I^j}$ is the weight family, and $g_+$ and $g_-$ are functions representing the dependency on the past. We denote by $w_{1:M}^j:=(w^j_m)_{1\leq m \leq M}$ the total family of synaptic weights of neuron $j$.

Synaptic weights are updated after every time period during which an object $o(m)$ is presented, so they depend  on $m$. Moreover they represent a probability distribution, that is: for all $j\in J$, $i^\bullet \in I^j$, $1\leq m \leq M$,
$w^{i^\bullet \to j}_m > 0$, and $\sum_{l\in I^j}w^{i^\bullet \to j}_m = 1$.
Besides, for all $\bullet\in \{+,-\}$, $g_\bullet$ is such that $g_\bullet(k)\geq 0$ and $\sum_{k=1}^K g_\bullet(k) = 1$. The parameters $\varphi$ and $\alpha^j$ are chosen such that $p^{j,\text{cond}}_{m,t}(w^j_m)\in [0,1]$ a.s. for any weights $w^j_m$.

In section \ref{average}, we consider the linear case $\varphi=\text{Id}$, $\alpha^j=0$ and $I^j_-=\emptyset$ 
(the last two criterion ensuring that $p^{j,\text{cond}}_{m,t}(w^j_m)\in [0,1]$ a.s.). In this framework,
 one can simulate $X^j_{m,t}$ thanks to the Solo steps of Algorithm \ref{waca}: 
 the simulation of the activity of only one input neuron is needed. Hence in this case the algorithm
 is called HAN Solo. The fact that this method indeed gives a process satisfying \eqref{Hawkes} comes from 
 the Kalikow decomposition of the Hawkes process. For more details about the legitimacy of this 
 operation and the particular case of discrete Hawkes processes, we refer the reader to Section 4.2.3
  of \cite{ost2020sparse} and Appendix \ref{sec Kalikow}. 

\subsection{Learning rule} \label{learning rule}

We use an expert aggregation algorithm to update the weights. One interpretation of the expert aggregation problem \citep{cesa2006prediction} is as follows: a forecaster can choose between several experts, each with an unknown gain, during $M$ rounds. In each round, the forecaster defines a strategy, \ie a probability distribution over the set of experts, and receives the corresponding aggregate sum of the gains. An expert aggregation algorithm is a function used to update the probability distribution in order to maximize gains. This update depends only on past gains. 

Here, each output neuron $j$ is a forecaster, and the experts are its connections to input neurons $I^j$. A round corresponds to the presentation of an object, and the synaptic weights, that we reinterpret as a probability distribution thanks to the intuition given by Kalikow decomposition in the linear case, correspond to the probability distribution chosen by neuron $j$ in the expert aggregation. The gain of connection $i^\bullet$ (the expert) w.r.t. the output neuron $j$ (the forecaster) at round $m$ is denoted by $g^{i^\bullet \to j}_m$ and is defined precisely in the next section. We denote by $G^j_m:=\sum_{m'=1}^m \sum_{i^\bullet\in I_j} w_{m'}^{i^\bullet \to j} g_{m'}^{i^\bullet \to j}$ the cumulated gain of output neuron $j$ until round $m$, and $G_m^{i^\bullet \to j}:=\sum_{m'=1}^m g_{m'}^{i^\bullet \to j}$ the cumulated gain of connection $i^\bullet$. 

The weights of neuron $j$ for the next round are then updated thanks to previously acquired knowledge about the gains, that is, $G^j_m$ and $(G^{i^\bullet \to j}_m)_{i^\bullet\in I_j}$:
\begin{equation} \label{func agg}
    w^j_{m+1}= f(G^j_m,(G^{i^\bullet \to j}_m)_{i^\bullet\in I_j})
\end{equation}
where the function $f$ is the expert aggregation algorithm. Let us give two examples of such algorithms (see details and other algorithms in \cite{cesa2006prediction}) that we will use in sections \ref{limit} and \ref{concrete ex}: \\
$\bullet$ EWA (Exponentially Weighted Average)
\begin{equation}\label{EWA}
    w^{i^\bullet \to j}_{m+1} = \frac{\exp\Big(\eta^j  G^{i^\bullet \to j}_m \Big)}{\sum_{k\in I^j} \exp\Big(\eta^j G^{k\to j}_m \Big)}.
\end{equation} 
The parameter $\eta^j$ is called the learning rate.\\
$\bullet$ PWA (Polynomially Weighted Average) 
\begin{equation} \label{PWA}
    w_{m+1}^{i^\bullet \to j} = \frac{(G_m^{i^\bullet \to j}-G_m^j)_+^{\beta^j-1}}{\sum_{k\in I_j} (G_m^{k\to j}- G_m^j)_+^{\beta^j-1}}
\end{equation}
where $\beta^j\geq 2$ is a parameter to choose. \\
Note that EWA and PWA implement two different strategies: EWA only takes into account the cumulated gains of the experts (\ie connections $i^+$ and $i^-$) and assigns strictly positive weights, whereas PWA compares them with those of the forecaster (\ie neuron $j$), and as soon as an expert is outclassed by the forecaster, it is assigned a weight equal to zero. 

This defines Algorithm \ref{waca} (HAN, for Hawkes Aggregation of neurons), which is given for any expert aggregation algorithm $f$. The algorithm's complexity is determined by the number of calls made to the pseudorandom generator for obtaining Bernoulli variables (and potentially the cost of expert update). With $D$ the output nodes degree, we need to simulate 
$\mathcal{O}(NM(|J|+|I|)$ Bernoulli r.v., and perform 
$\mathcal{O}(NM|J|KD)$ (resp. $\mathcal{O}(M(|J|D + N|I|))$) elementary operations (scalar addition, exponential) for HAN (resp. HAN-Solo).

\begin{algorithm}
    \textbf{Initialization:} $G^{i^\bullet \to j}_0:=0$, $w^{i^\bullet \to j}_1:=1/|I^j|$ \\ 
    \nl
    \For{$m=1$ {\bfseries to} $M$}{ \nl
        \For{$t=1$ {\bfseries to} $K$}{ \nl
            \For{$i\in I$}{ \nl
                $X^i_{m,t} \sim \mathcal{B}(p^i_m)$. 
            } 
            }\nl
         \For{$t=K+1$ {\bfseries to} $N$}{ \nl
            \For{$i\in I$}{ \nl
                $X^i_{m,t} \sim \mathcal{B}(p^i_m)$. 
            } \nl
            \For{$j\in J$}{ \nl
                     $X^j_{m,t} \sim \mathcal{B}(p^{j,\text{cond}}_{m,t})$   
            }
        } \nl
        \For{$j\in J$}{ \nl
            \For{$i^\bullet\in I^j$}{ \nl
            Compute $g^{i^\bullet \to j}_m$ according to \eqref{formule credit}. 
            } \nl
            $w^j_{m+1} \xleftarrow[]{}  f(G^j_m,(G^{i^\bullet \to j}_m)_{i^\bullet\in I^j})$ \tcp{aggregate experts using \eqref{func agg}}
        }
    } 
    \vspace{-2mm}
    \textbf{Output}: $(\arg\max_j \widehat{p^j_m})_{m}$ \tcp{classifications}

    \vspace{-4.8cm}
    \hspace{4.45cm} \textbf{Solo steps}  \\
    \hspace{4.6cm} $\hat{i}^+ \sim w_m^j$ \\ 
    \hspace{4.6cm} $\hat{k} \sim g_+$ \\ 
    \hspace{4.6cm} $X^j_{m,t} \xleftarrow[]{} X^{\hat{i}}_{m,t-\hat{k}}$ \\

    \begin{tikzpicture}[overlay, remember picture]
        \draw[-] ([xshift=-0.5cm, yshift=0.2cm] 5.1,1.5) -- ([xshift=-0.5cm, yshift=0.7cm] 5.1,-0.5) node[midway,left]{};

         \draw [decorate,decoration={brace,amplitude=7pt,mirror},xshift=-4pt,yshift=0pt] (4.75,2.1) -- (4.75,0.1) node [black,midway,rotate=90,yshift=+0.6cm]{};
        
        
    \end{tikzpicture}
    \vspace{27mm}
    \caption{HAN}
    \label{waca}
\end{algorithm}


\subsection{Gain formula} \label{credit}
To make the network realistic, neurons can learn only thanks to the knowledge of the spikes emitted by the network in the past, and do not have access to spiking probabilities. We use the following gain, computed in Step $12$ of Algorithm \ref{waca}:
\begin{align} \label{formule credit}
    &g^{i^+\to j}_m = \left\{
    \begin{array}{lll}
         \widehat{p^i_m} \times \frac{M}{M^j} &\text{if } o(m)\in j  \\ 
          \\
        - \widehat{p^i_m} \!\times\! \frac{M}{M^{j'}} \!\times\! \frac{1}{\abs{J}-1} &\text{if } o(m) \in j' 
    \end{array}
\right.
\end{align}
where $j'\neq j$, $\widehat{p^i_m}:=\sum_{t=1}^N X_{m,t}^i/N $ and $g^{i^-\to j}_m = -g^{i^+\to j}_m$. Indeed, if the object belongs to class $j$, then the network classifies correctly the object if neuron $j$ spikes more than the others, so excitatory (resp. inhibitory) connections get positive (resp. negative) gains to force $j$ to spike more. Otherwise, $j$ should spike less than the neuron coding for the correct class, so the excitatory (resp. inhibitory) connections get negative (resp. positive) gains (\ie penalties or losses) to get $j$ to spike less. In the present work, the gain $g^{i^\bullet \to j}_m$ depends on the correct class of the presented object, but not on the network own classification, \ie the category in which the network classified the previous objects. Whether the network correctly classifies the object or not does not influence the gain.

Using this gain supposes that we know in advance the value of $M^j$, that is the number of presented objects in class $j$ among the first $M$ objects, and this, for every $j \in J$. If it is not the case, we can replace $\frac{M}{M^j}$ by $\frac{m}{m^j}$ at object $m$, which can be seen as an estimator of the proportion of objects belonging to class $j$.

\subsection{Feasible weight family }
In order to study HAN theoretically, we need to define families of feasible weights, which will be ideal weights that do not vary in time and whose performance we want to match. As stated in section \ref{sec input}, the activity of input neurons depends only on the nature of the presented object, and not on time. Hence, the spiking probability of neuron $j$ with constant synaptic weights $q^j=(q^{i^\bullet \to j})_{i^\bullet\in I^j}$ when object with nature $o\in \mathcal{O}$ is presented to the network does not depend on time either and is $p^j_o(q^j):=\mathbb{E}[p^{j,\text{cond}}_{m,t}(q^j)]$.

\begin{definition}[Feasible weight family]
A \emph{feasible weight family} is a constant weight family $q=(q^j)_{j\in J}$ independent of $m$ such that for all $j\in J$, $o\in j$, $j'\neq j$,
\[p^j_o(q^j) > p^{j'}_o(q^{j'}).\]
The constant
\[\text{Disc}_{\text{safe}}(q) := \min_{j\in J, o\in j, j'\neq j} \Big\{ p^j_o(q^j) - p^{j'}_o(q^{j'})\Big\} \]
is called the \emph{safety discrepancy} of the family $q$. We denote $\mathcal{Q}$ the set of feasible weight families.
\end{definition}

Hence, a feasible weight family is a weights family which enables the network to correctly classify the objects in general; the larger $\text{Disc}_{\text{safe}}(q)$, the lesser it will be mistaken. In Appendix \ref{app lim}, we give examples of such a feasible weight family in a particular case. 

In section \ref{subsec average}, we give theoretical results about HAN Solo when $f$ meets certain conditions; in section \ref{limit}, we study the limit behavior of HAN with the EWA algorithm, and in section \ref{concrete ex} we study a specific case of network and we compare numerically HAN with EWA, HAN with PWA and the Component-Cue model from which HAN is inspired.

\section{THEORETICAL RESULTS} \label{average}

\subsection{Average learning}
\label{subsec average}
In this section, we give theoretical guarantees that our network learns to classify objects as well as any feasible weight family on average under certain conditions. We consider the HAN Solo case $\varphi=\text{Id}$, $\alpha^j=0$ and $I^j_-=\emptyset$. In this simplified framework the set $I^j$ is identified with the set $I^j_+$ so the experts are the input neurons linked to neuron $j$ and we use the notation $i$ instead of $i^+$.

Expert aggregation algorithms are designed to achieve low regret bounds. More precisely, when applied to our setting, the regret of the forecaster/neuron $j$ is 
\[R^j_M:= \max_{q^j\in \mathcal{X}^j} \sum_{i\in I^j} q^{i\to j} G^{i\to j}_M - G^ j_m\]
where $\mathcal{X}^j$ is the set of probability distributions over $I^j$. Note that the maximum is achieved for any combination of diracs on the experts with maximum cumulated gain (there can be ties). This regret can be translated in spiking probabilities of neuron $j$ thanks to our particular choice of gain (see Appendix \ref{app gain}). However, if we want to understand how the network learns, we need a more global notion involving the activity of all output neurons. Let 
$$\widehat{p^j_m(q^j_m) }:= \sum\limits_{i\in I^j} q^{i\to j}_m \widehat{p^i_m}$$
where $q^j_{1:M}:=(q^j_m)_{1\leq m\leq M}$ is a weight family. This an estimator of the spiking probability of neuron $j$ if the synaptic weights were given by $q^j_m$ during the presentation of the $m^{th}$ object. For any weights $q^j_{1:M}:=(q^j_m)_{1\leq m\leq M}$, we then interpret
 $$\widehat{P^{j,j'}_M(q^j_{1:M})} := \dfrac{1}{M^{j'}}\sum\limits_{m, \text{ }o(m)\in j'}
  \widehat{p^j_m(q^j_m)}$$ as an estimator of the average spiking probability of neuron $j$ with weights $q^j_{1:M}$ during the presentation of objects in class $j'$. In the notation $P^{j,j'}_M$, index $j$ refers to a neuron, whereas index $j'$ refers to a class.

Then the \emph{class discrepancy} of class $j$ is for a network governed by weights $q_{1:M}:=(q^j_{1:M})_{j\in J}$ is defined by $$\text{Disc}^j_M(q_{1:M}) = \widehat{P^{j,j}_M(q^j_{1:M})} - \dfrac{1}{\abs{J}-1}\sum\limits_{j'\neq j} \widehat{P^{j',j}_M(q^{j'}_{1:M})}.$$
  It measures how much neuron $j$ fires more than the other neurons when an object of class $j$ is shown, and it is therefore a global information at the network level. 
  We give another choice of gain (with some drawbacks) in Appendix \ref{app gain}, where the class discrepancy can be expressed in terms of empirical spiking probabilities.

Finally, the average class discrepancy of output neurons is called the \textit{network discrepancy} and is defined by $$\text{Disc}_M(q_{1:M}) = \frac{1}{\abs{J}}\sum_{j\in J}\text{Disc}^j_M(q_{1:M}).$$
Like safety discrepancy for a feasible weight family, the network discrepancy measures how much output neurons fire more than the others when an object of their class is shown. However, unlike safety discrepancy, it gives average information rather than quantifying the worst possible deviation.

\begin{assumption}[Regret bound] \label{assumption reg}
The expert aggregation algorithm $f$ used in HAN Solo is such that for any deterministic sequence of gains $g^j_{1:M}\in [a,b]$, 
\[R^j_M \leq K(\abs{I},b-a) \sqrt{M}\]
where $K(\abs{I},b-a)$ is a constant depending on $\abs{I}$ and $b-a$.
\end{assumption}
Both EWA \eqref{EWA} and PWA \eqref{PWA} satisfy Assumption \ref{assumption reg}. See Appendix \ref{app regret bounds} for details on the bounds. 

\begin{assumption} \label{assump proportions}
    There exists a constant $\xi>0$ independent of $M$ such that for every $j\in J$, $M_j/M\geq \xi$.
\end{assumption}
Assumption \ref{assump proportions} means that every class of objects is well represented during the learning phase.
\begin{theorem}[Oracle inequality] \label{theorem oracle}
Suppose Assumptions \ref{assumption reg} and \ref{assump proportions} hold. Let $\alpha \in (0,1]$. Suppose $\mathcal{Q}$ is non-empty. Then with probability greater than $1-\alpha$,
 $$ \text{Disc}_M(w_{1:M}) \geq \max_{q\in \mathcal{Q}}\text{Disc}_{\text{safe}}(q) - E_{\text{tot}}(N,M,\alpha)$$
 where $E_{\text{tot}}(N,M,\alpha) = E_{\text{reg}}(M) + E(N,M,\alpha)$ with
$$
    E_{\text{reg}}(M) :=K\Big(\abs{I},\frac{\abs{J}}{\xi(\abs{J}-1)}\Big)\frac{1}{\sqrt{M}}$$
and
    $$E(N,M,\alpha):=\sqrt{\ln\Big(\frac{2\abs{I}\abs{J}}{\alpha}\Big)\frac{2}{\xi NM}}.$$

\end{theorem}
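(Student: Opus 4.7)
The plan is to turn the statement into a chain of inequalities of the form $\text{Disc}_M(w_{1:M}) \geq \text{Disc}_M(q) - E_{\text{reg}}(M) \geq \text{Disc}_{\text{safe}}(q) - E_{\text{tot}}(N,M,\alpha)$, with each step absorbing exactly one of the two error terms, and then to maximize over $q\in\mathcal{Q}$ at the end.

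\textbf{Step 1 (rewrite the discrepancies as normalized cumulative gains).} I would first plug the gain formula \eqref{formule credit} into $G_M^j=\sum_m\sum_i w_m^{i\to j}g_m^{i\to j}$ and split the sum over $m$ according to whether $o(m)\in j$ or $o(m)\in j'\neq j$. The scalings $M/M^j$ and $M/(M^{j'}(|J|-1))$ then collapse the partial sums into $M\,\widehat{P_M^{j,j}(w^j_{1:M})}$ and $-\tfrac{M}{|J|-1}\sum_{j'\neq j}\widehat{P_M^{j,j'}(w^j_{1:M})}$. Summing over $j$ and swapping indices in the cross terms converts $\widehat{P_M^{j,j'}}$ into $\widehat{P_M^{j',j}}$ (the shape required by the definition of class discrepancy), giving the key identity $\sum_j G_M^j = M|J|\,\text{Disc}_M(w_{1:M})$. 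The identical computation with a constant weight family $q$ in place of $w_{1:M}$ yields $\sum_j\sum_{i\in I^j} q^{i\to j}G_M^{i\to j} = M|J|\,\text{Disc}_M(q)$.

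\textbf{Step 2 (apply the regret bound).} Since $\widehat{p_m^i}\in[0,1]$ and $M/M^j\leq 1/\xi$ by Assumption \ref{assump proportions}, every gain $g_m^{i\to j}$ lies in an interval of length at most $|J|/(\xi(|J|-1))$. Assumption \ref{assumption reg} then supplies $R_M^j \leq K(|I|,|J|/(\xi(|J|-1)))\sqrt{M}$ uniformly in $j$, and the definition of the regret gives $G_M^j \geq \sum_i q^{i\to j}G_M^{i\to j} - R_M^j$ for any fixed $q^j\in\mathcal{X}^j$. Averaging over $j$ and invoking the Step 1 identities yields $\text{Disc}_M(w_{1:M}) \geq \text{Disc}_M(q) - E_{\text{reg}}(M)$, valid for every deterministic family $q$.

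\textbf{Step 3 (concentration against $\text{Disc}_{\text{safe}}(q)$).} For each $i$ and $j'$, set $Y_i^{j'} = (NM^{j'})^{-1}\sum_{m:\,o(m)\in j'}\sum_{t=1}^N X_{m,t}^i$; by the input-layer independence assumption of Section \ref{sec input} this is the mean of $NM^{j'}$ independent Bernoullis, and $\widehat{P_M^{j,j'}(q^j)}=\sum_i q^{i\to j}Y_i^{j'}$ is a convex combination. Hoeffding's inequality together with $M^{j'}\geq \xi M$ and a union bound over the $|I||J|$ pairs $(i,j')$ give, with probability $\geq 1-\alpha$, $|Y_i^{j'}-\mathbb{E}[Y_i^{j'}]|\leq \sqrt{\ln(2|I||J|/\alpha)/(2\xi NM)}$ for all $(i,j')$ simultaneously; the same bound transfers to $|\widehat{P_M^{j,j'}(q^j)}-\mathbb{E}[\widehat{P_M^{j,j'}(q^j)}]|$, and the two summands of $\text{Disc}_M^j(q)$ jointly contribute a factor $2$, which is exactly $E(N,M,\alpha)$. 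Finally, $\mathbb{E}[\widehat{P_M^{j,j}(q^j)}]-\mathbb{E}[\widehat{P_M^{j',j}(q^{j'})}]$ is a $1/M^j$-average over $m$ with $o(m)\in j$ of the differences $p_{o(m)}^j(q^j)-p_{o(m)}^{j'}(q^{j'})$, each of which is at least $\text{Disc}_{\text{safe}}(q)$ by feasibility; averaging over $j$ and over $j'\neq j$ preserves this inequality, so $\text{Disc}_M(q)\geq \text{Disc}_{\text{safe}}(q)-E(N,M,\alpha)$. Chaining with Step 2 and maximizing over $q\in\mathcal{Q}$ finishes the proof.

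\textbf{Main obstacle.} The bookkeeping in Step 1 is the delicate part: the class discrepancy of class $j$ involves the off-diagonal term $\widehat{P_M^{j',j}(q^{j'})}$ (neuron $j'$ firing on class-$j$ objects), while the gain formula naturally produces $\widehat{P_M^{j,j'}(q^j)}$ (neuron $j$ firing on class-$j'$ objects). Only after summing over $j$ and exchanging the summation indices do these two shapes match, and that exchange is what makes the unequal factors $M/M^{j'}$ cancel, so that Assumption \ref{assumption reg} can be applied neuron-by-neuron with a single common constant $K(|I|,|J|/(\xi(|J|-1)))$.
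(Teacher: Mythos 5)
Your proposal is correct and follows essentially the same route as the paper: the regret bound applied neuron-by-neuron with gains in an interval of length $\abs{J}/(\xi(\abs{J}-1))$, the index-swapping identity relating the sum of per-neuron (gain-based) discrepancies to the network discrepancy, and a Hoeffding-plus-union-bound concentration step with the factor $2$ coming from the two summands of each class discrepancy. The only cosmetic difference is that you phrase Step 1 in terms of cumulated gains $G_M^j$ rather than the paper's normalized neuronal discrepancies $\text{disc}_M^j = G_M^j/M$.
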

In a nutshell, assuming the error $E_{\text{tot}}(N,M,\alpha)$ is negligible, this result on network discrepancy means that in average, a class neuron spikes more than the other neurons when presented with an object in its class with high probability. Thus, in average, the network correctly classifies the objects under the hypothesis that a feasible weight family  exists. 
The error is twofold: one part, $E_{\text{reg}}(M)$, comes from the regret bound and is in $O(M^{-1/2})$, whatever $N$. The other part, $E(N,M,\alpha)$, comes from the inherent randomness of our system and is in is in $O((NM)^{-1/2})$. Hence if $M$ is large enough, the total error $E_{\text{tot}}(N,M,\alpha)$ is negligible compared to the constant $\max_{q\in \mathcal{Q}}\text{Disc}_{\text{safe}}(q)$. In this sense, HAN performs as well as an oracle that would know the best feasible weight family in advance: its network discrepancy is larger than the best safety discrepancy, with asymptotic multiplicative constant 1.  However if $M$ is not large enough we pay a price in $O(M^{-1/2})$ for having seen only that many objects and being initialized with a weight family that is not feasible. Finally if $M$ and $N$ are not large enough, the randomness in the system increases, and the approximation of the activity of input neurons given by the gains may be insufficient to find the best experts among them, or the time of the presentation of an object could be too short to see which output neuron significantly spikes the most.

\subsection{Limit behavior} 
\label{limit}

In this section, independent of the previous one, we are going to study the limit behavior of the network. We are no longer interested in average results: the linearity of $\varphi$ is not needed anymore and \emph{we are in the general case \eqref{Hawkes}, where inhibition is allowed}. However, we want to conduct a more precise analysis of the network's limit behavior and this analysis can only be carried out on a case-by-case basis, depending on the expert aggregation algorithm chosen. Here, we have decided to use the EWA algorithm, for its simplicity and universality.

Instead of assuming that a feasible weight family exists as in the previous section, we want to build directly the limit of the weights, hoping that this limit makes sense from a learning point of view. But if the input neurons encoding the features have nothing to do with the output class (e.g. two classes "blue" and "red" and all neurons having the same firing rates whatever the color) the problem cannot be solved. This is why we introduce the notion of \emph{feature discrepancy} of connection $i^+$ (resp. $i^-$) with respect to class $j$, defined by: 
\[d^{i^+\to j} := \frac{1}{n^j}\sum_{o\in j} p^i_o - \frac{1}{\abs{J}-1} \sum_{j'\neq j} \frac{1}{n^{j'}}\sum_{o\in j'}p^i_o\]
(resp. $d^{i^-\to j}=-d^{i^+\to j}$) where $n^j$ is the number of natures of objects belonging to class $j$.
For excitatory connections (resp. inhibitory), this feature discrepancy is the difference between the average firing rate of neuron $i$ when presented objects belonging to class $j$, and the average firing rate of neuron $i$ when presented objects belonging to other classes (resp. the opposite). It indicates the extent to which neuron $i$ has higher-than-usual firing rate when presented with objects in category $j$. Thus one can define the set of connections that are the most sensible to class $j$: $\tilde{I}^j=\arg\max_{i^\bullet\in I^j} d^{i^\bullet \to j},$ as well as the gap in discrepancy if $\tilde{I}^j\not =I^j$:
$$\gamma^j= \max_{i^\bullet\in I^j} d^{i^\bullet \to j} - \max_{i^\bullet\in I^j\setminus \tilde{I}^j} d^{i^\bullet \to j},$$
which measures how good  the most sensible connections are with respect to the others. Note in particular that if all the $d^{i^\bullet \to j}$'s are null, $\tilde{I}^j=I^j$, there is no gap and nothing can be learned from the network because the input neurons are in fact not sensible to important features for the classification.

\begin{theorem} \label{theorem cvg feasible weight family} 
Suppose each nature of object is presented the same amount of times: for all $o\in \mathcal{O}$, $\abs{\{m, o(m)=o\}}=\frac{M}{\abs{\mathcal{O}}}$. Let $\eta^j=\frac{1}{\abs{\mathcal{O}}}\sqrt{2\frac{\ln(\abs{I^j})}{M}}$ and $w_\infty:= (w_\infty^{i^\bullet \to j})_{j\in J, i^\bullet\in I^j}$ where
$$ w_\infty^{i^\bullet \to j} = \left\{
    \begin{array}{ll}
          \abs{\Tilde{I}^j}^{-1} &\text{if } i^\bullet\in \Tilde{I}^j \\
       0 & \text{otherwise}.
    \end{array}
    \right.
$$
Then with probability $1-\alpha$, for all $j\in J$:
\\$\bullet$ if $\tilde{I}^j\neq I^j$, then the weights $w^{i^\bullet \to j}_{M+1}$ at the end of the learning phase satisfy
$$|w^{i^\bullet \to j}_{M+1}-w_\infty^{i^\bullet \to j}|\leq E^j(N,\alpha) + E^j_{\text{EWA}}(M)$$
where 
$$E^j_{\text{EWA}}(M) :=\max\Big\{1, \frac{\abs{I^j}}{\abs{\Tilde{I}^j}}-1\Big\}\frac{1}{\abs{\Tilde{I}^j}}e^{-\frac{\gamma^j}{\abs{\mathcal{O}}}\sqrt{2\ln(\abs{I^j})M}}$$ 
and 
$$E^j(N,\alpha) :=   \abs{I^j}\sqrt{\ln\Big(\frac{2\abs{I}\abs{J}}{\alpha}\Big)\frac{\ln(\abs{I^j})}{\abs{\mathcal{O}}N}}.$$ \\
$\bullet$ if $\tilde{I}^j=I^j$, then the weights $w^{i^\bullet \to j}_{M+1}$ at the end of the learning phase mostly did not evolve, i.e. 
$$|w^{ i^\bullet \to j}_{M+1} - \abs{I^j}^{-1}| \leq E^j(N,\alpha).$$ 
\end{theorem}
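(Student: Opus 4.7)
The plan is to split the target error via $|w^{i^\bullet\to j}_{M+1} - w_\infty^{i^\bullet\to j}| \le |w^{i^\bullet\to j}_{M+1} - \tilde w^{i^\bullet\to j}| + |\tilde w^{i^\bullet\to j} - w_\infty^{i^\bullet\to j}|$, where $\tilde w^{i^\bullet\to j} := e^{\eta^j M d^{i^\bullet\to j}}/\sum_{k\in I^j} e^{\eta^j M d^{k\to j}}$ is the deterministic EWA output obtained if gains were replaced by their expectations. Under the uniform-presentation hypothesis one has $M^j = M n^j/|\mathcal{O}|$, so the coefficient $c_m$ of $\widehat{p^i_m}$ inside $g^{i^+\to j}_m$ equals $|\mathcal{O}|/n^j$ or $-|\mathcal{O}|/(n^{j'}(|J|-1))$ depending on the class of $o(m)$. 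Taking expectations, using $\mathbb{E}[\widehat{p^i_m}] = p^i_{o(m)}$ and regrouping by nature, one checks that $\mathbb{E}[G^{i^\bullet\to j}_M] = M d^{i^\bullet\to j}$ exactly. Since for each fixed $i$ the $X^i_{m,t}$ are independent Bernoulli variables (temporal independence within a neuron), $G^{i^\bullet\to j}_M$ is a bounded affine combination of independent $[0,1]$-valued random variables with total squared coefficient of order $|\mathcal{O}|M/N$; Hoeffding's inequality together with a union bound over the $|I||J|$ pairs $(i^\bullet,j)$ then yields, with probability at least $1-\alpha$, a uniform control $|\delta^{i^\bullet\to j}_M| := |G^{i^\bullet\to j}_M - Md^{i^\bullet\to j}| \lesssim \sqrt{|\mathcal{O}|M\ln(2|I||J|/\alpha)/N}$.

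For the stochastic part $|w^{i^\bullet\to j}_{M+1} - \tilde w^{i^\bullet\to j}|$, write $w = A/B$ and $\tilde w = A'/B'$ with $A = e^{\eta^j G^{i^\bullet\to j}_M}$, $B = \sum_k e^{\eta^j G^{k\to j}_M}$ and the primes denoting the corresponding deterministic quantities; expand the ratio difference as $(A - A')/B + A'(B' - B)/(B B')$. A uniform shift $\theta := \eta^j \max_k |\delta^{k\to j}_M|$ of the log-numerators gives $A \in [e^{-\theta}A', e^\theta A']$ and, via an entrywise estimate over the $|I^j|$ terms, $|B - B'| \le |I^j| \cdot e^\theta \theta \max_k e^{\eta^j M d^{k\to j}}$, whence $|w - \tilde w| \lesssim |I^j|\theta$. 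Plugging $\eta^j = |\mathcal{O}|^{-1}\sqrt{2\ln|I^j|/M}$ into the Hoeffding bound converts $|I^j|\theta$ precisely into the stated $E^j(N,\alpha)$.

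For the deterministic part, when $\tilde I^j = I^j$ the values $d^{i^\bullet\to j}$ all coincide, so $\tilde w^{i^\bullet\to j} = 1/|I^j| = w_\infty^{i^\bullet\to j}$ exactly and only the stochastic error remains. Otherwise, setting $d^\star := \max_{i^\bullet\in I^j} d^{i^\bullet\to j}$ and factoring out $e^{\eta^j M d^\star}$ from numerator and denominator, define $R := \sum_{k\notin\tilde I^j} e^{-\eta^j M(d^\star - d^{k\to j})}$; the definition of $\gamma^j$ gives $R \le (|I^j| - |\tilde I^j|) e^{-\eta^j M \gamma^j}$. For $i^\bullet \in \tilde I^j$, $\tilde w^{i^\bullet\to j} = 1/(|\tilde I^j| + R)$, hence $\abs{\tilde w^{i^\bullet\to j} - 1/|\tilde I^j|} = R/(|\tilde I^j|(|\tilde I^j|+R)) \le ((|I^j|/|\tilde I^j|) - 1)\, e^{-\eta^j M\gamma^j}/|\tilde I^j|$; for $i^\bullet \notin \tilde I^j$, $\tilde w^{i^\bullet\to j} \le e^{-\eta^j M \gamma^j}/|\tilde I^j|$ by a direct upper bound on the numerator. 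The $\max\{1, |I^j|/|\tilde I^j| - 1\}$ factor in the statement covers both sub-cases uniformly, and substituting $\eta^j M \gamma^j = (\gamma^j/|\mathcal{O}|) \sqrt{2\ln|I^j|\, M}$ yields exactly $E^j_{\text{EWA}}(M)$.

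The main obstacle is the stochastic perturbation step: recovering the $|I^j|$ prefactor of $E^j(N,\alpha)$ requires the entrywise rather than softmax-Lipschitz estimate on $|B - B'|$, and is the only place where a crude bound is used. Everything else is routine bookkeeping, orchestrated by the specific learning rate $\eta^j = |\mathcal{O}|^{-1}\sqrt{2\ln|I^j|/M}$, which is tailored so that $\eta^j M$ scales like $\sqrt{M}$ and the deterministic $e^{-\eta^j M \gamma^j}$ decay matches the shape of $E^j_{\text{EWA}}(M)$.
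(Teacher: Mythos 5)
Your proposal is correct and follows essentially the same route as the paper's proof: the identical decomposition into a deterministic EWA output driven by the exact cumulated gains $\overline{G}^{i^\bullet\to j}_M=Md^{i^\bullet\to j}$ (analyzed just as in the paper's Proposition \ref{prop lim M} to produce $E^j_{\text{EWA}}(M)$, including the two sub-cases $i^\bullet\in\Tilde{I}^j$ and $i^\bullet\notin\Tilde{I}^j$) plus a stochastic perturbation controlled by Hoeffding's inequality and a union bound over the $\abs{I}\abs{J}$ pairs as in Propositions \ref{prop hoef} and \ref{prop bar}. The only cosmetic difference is the perturbation step, where the paper applies the mean value theorem to the softmax --- whose gradient bound $\eta^j\sqrt{\abs{I^j}}$ combined with $\lVert\cdot\rVert_2\leq\sqrt{\abs{I^j}}\lVert\cdot\rVert_\infty$ already yields the clean $\abs{I^j}$ prefactor of $E^j(N,\alpha)$ that you attribute to the entrywise estimate --- whereas your ratio expansion gives the same order only up to a harmless $e^{O(\theta)}$ factor hidden in your $\lesssim$.
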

This choice of $\eta^j$ has good theoretical guarantees (if the time horizon is unknown, we can use a similar time-dependent learning rate providing similar results, see details in Appendix \ref{app regret bounds}). Note that this result applies for weights $w_{M+1}^j$ and not $w_m^j$: it is true only at the end of the learning phase. In the first case, the  synaptic weights converge to the constant family $w_\infty$, which is uniform on input neurons with maximal feature discrepancy. The error is twofold. One part, $E^j(N,\alpha)$, coming from the randomness of our system is in $O(N^{-1/2})$ and is the equivalent of the error term $E(N,M,\alpha)$ in the oracle inequality (Theorem \ref{theorem oracle}). The other part, $E^j_{EWA}(M)$, coming from the fact that we have only seen $M$ objects and very specific to the EWA algorithm is in $\exp(-O(\gamma^j\sqrt{M}))$ and is the equivalent of the error term $E_{\text{reg}}(M)$ which was specific to the expert aggregation algorithm used in HAN Solo. This points out that the larger the gap in discrepancy the quicker the learning. The second case is the non interesting one, where there is nothing to learn (for large $N$ weights are back to their initial value) either because HAN with EWA is good since initialization or because the features are so badly encoded by the input neurons that nothing can be learned.

 Going from the local notion of feature discrepancy to the global notion of network discrepancy is not straightforward, even if it seems intuitive to hope that the weight family with the largest feature discrepancy also achieves the largest network discrepancy. This is why, to complete the circle, we need to assume that $w_\infty$ is a feasible weight family in the next corollary.

\begin{corollary}\label{cor}
Suppose the assumptions of Theorem \ref{theorem cvg feasible weight family} hold and $w_\infty$ is a feasible weight family. Let $L$ be the Lipschitz constant of $\varphi$. Then at the end of the learning phase, with probability $1-\alpha$, for all $j\in J$, $j'\neq j$, $t\in \{1,\dots,N\}$, supposing that we present an object $o(M+1)\in j$ to the network we have
\begin{align*}
    &p^{j,\text{cond}}_{M+1,t}(w^j_{M+1}) -p^{j',\text{cond}}_{M+1,t}(w^{j'}_{M+1}) \\
    &\geq \Big( p^{j,\text{cond}}_{M+1,t}(w^j_{\infty}) -p^{j',\text{cond}}_{M+1,t}(w^{j'}_{\infty}) \Big) - E^{j,j'}_{\text{tot}}(N,M,\alpha)
\end{align*}
where 
\[
E^{j,j'}_{\text{tot}}(N,M,\alpha):= L\sum_{h\in \{j,j'\}} \abs{I^h}(E^h(N,\alpha) + E^h_{\text{EWA}}(M)).
\]
\end{corollary}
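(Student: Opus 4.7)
The plan is to propagate the coordinatewise weight error from Theorem \ref{theorem cvg feasible weight family} through formula \eqref{Hawkes} using the Lipschitz regularity of $\varphi$, and then combine the bounds for $j$ and $j'$ by the triangle inequality.

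First, I would invoke Theorem \ref{theorem cvg feasible weight family}, which, on a single event of probability at least $1-\alpha$, yields the uniform coordinatewise bound
\[
|w^{i^\bullet\to h}_{M+1} - w^{i^\bullet\to h}_\infty| \;\leq\; E^h(N,\alpha) + E^h_{\text{EWA}}(M)
\]
for every $h\in J$ and every $i^\bullet\in I^h$. Note this bound is valid in both cases of the theorem: in the ``degenerate'' case $\tilde I^h=I^h$, only $E^h(N,\alpha)$ is needed, but adding $E^h_{\text{EWA}}(M)$ only enlarges the bound. Because the weights $w^h_{M+1}$ depend only on spikes emitted before round $M+1$, this event is independent of the object $o(M+1)$ that is then presented, and we condition on it throughout.

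Next, I would propagate this to conditional spiking probabilities. For a fixed $h$, writing $A^{\bullet}_{i,t} := \sum_{k=1}^K g_\bullet(k)\, X^i_{M+1,t-k}$, the argument of $\varphi$ in \eqref{Hawkes} differs between $w^h_{M+1}$ and $w^h_\infty$ only by
\[
\sum_{i^+\in I^h_+}(w^{i^+\to h}_{M+1}-w^{i^+\to h}_\infty)A^{+}_{i,t} \;-\; \sum_{i^-\in I^h_-}(w^{i^-\to h}_{M+1}-w^{i^-\to h}_\infty)A^{-}_{i,t}.
\]
Since $X^i_{M+1,\cdot}\in\{0,1\}$ and $\sum_k g_\bullet(k)=1$, we have $A^\bullet_{i,t}\in[0,1]$, so by the triangle inequality the absolute value of this difference is at most $\sum_{i^\bullet\in I^h}|w^{i^\bullet\to h}_{M+1}-w^{i^\bullet\to h}_\infty|\leq |I^h|(E^h(N,\alpha)+E^h_{\text{EWA}}(M))$. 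Applying the $L$-Lipschitz property of $\varphi$ gives
\[
\bigl|p^{h,\text{cond}}_{M+1,t}(w^h_{M+1})-p^{h,\text{cond}}_{M+1,t}(w^h_\infty)\bigr| \;\leq\; L\,|I^h|\bigl(E^h(N,\alpha)+E^h_{\text{EWA}}(M)\bigr).
\]

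Finally, I would apply this bound to $h=j$ and $h=j'$ and combine via the triangle inequality
\[
\bigl(p^{j,\text{cond}}_{M+1,t}(w^j_{M+1})-p^{j',\text{cond}}_{M+1,t}(w^{j'}_{M+1})\bigr) - \bigl(p^{j,\text{cond}}_{M+1,t}(w^j_\infty)-p^{j',\text{cond}}_{M+1,t}(w^{j'}_\infty)\bigr) \;\geq\; -E^{j,j'}_{\text{tot}}(N,M,\alpha),
\]
which is exactly the stated inequality. No step is really an obstacle: the only thing to watch carefully is that the high-probability event from Theorem \ref{theorem cvg feasible weight family} is already uniform over all $j\in J$, so no additional union bound over $(j,j')$ pairs is needed, and that both regimes of the theorem are absorbed into the same bound $E^h(N,\alpha)+E^h_{\text{EWA}}(M)$.
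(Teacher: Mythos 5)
Your proposal is correct and follows essentially the same route as the paper's proof: invoke the coordinatewise weight bound from Theorem \ref{theorem cvg feasible weight family}, propagate it through the argument of $\varphi$ using the Lipschitz constant and the fact that the $X^i_{M+1,t-k}$ are bounded by $1$ to get $|p^{h,\text{cond}}_{M+1,t}(w^h_{M+1})-p^{h,\text{cond}}_{M+1,t}(w^h_\infty)|\leq L\abs{I^h}(E^h(N,\alpha)+E^h_{\text{EWA}}(M))$, then combine the two terms for $h=j$ and $h=j'$ by the triangle inequality. Your extra remarks, that the degenerate case $\tilde I^h=I^h$ is absorbed into the same bound and that the event from the theorem is already uniform over $J$ so no further union bound is needed, are correct and slightly more careful than the paper's write-up.
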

Note that $\mathbb{E}[p^{j,\text{cond}}_{M+1,t}(w^j_{\infty}) -p^{j',\text{cond}}_{M+1,t}(w^{j'}_{\infty})] \geq \text{Disc}_{\text{safe}}(q)$. This means that with high probability, at the end of the learning phase the network classifies objects as well as it would with the feasible weight family $w_\infty$, with a decreasing error term of the same order as in Theorem \ref{theorem cvg feasible weight family}. This corollary can be seen as a non-average version of the oracle inequality of Theorem \ref{theorem oracle}.

\section{A CONCRETE EXAMPLE} \label{concrete ex}
 In this section, we give a specific case for which the limit can be guessed beforehand and is a feasible weight family under certain conditions in both HAN and HAN Solo framework, and we compare numerically HAN with EWA, with PWA and the Component-Cue model \citep{gluck1988conditioning}.

 \subsection{Framework} \label{framework ex}

 \begin{figure*}[ht]
    \centering
    \begin{minipage}{.4\textwidth}
        \vspace{-0.1cm}
        \hspace{-1.5cm}
        \includegraphics[width=8cm]{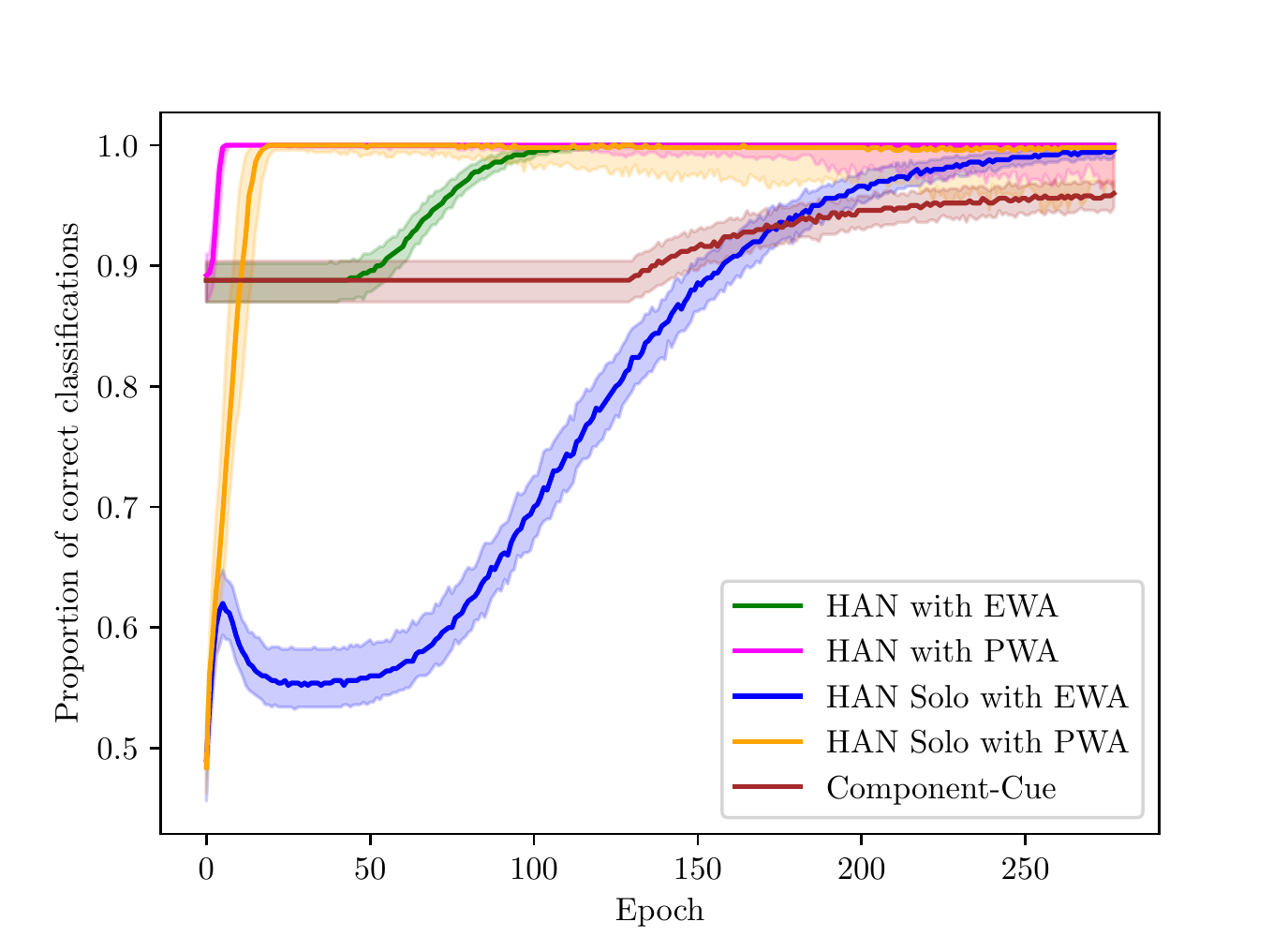}
    \end{minipage}%
    \raisebox{1ex}
    {\begin{minipage}{0.4\textwidth}
        \vspace{0.1cm}
        \hspace{-0.2cm}
        \includegraphics[width=8cm]{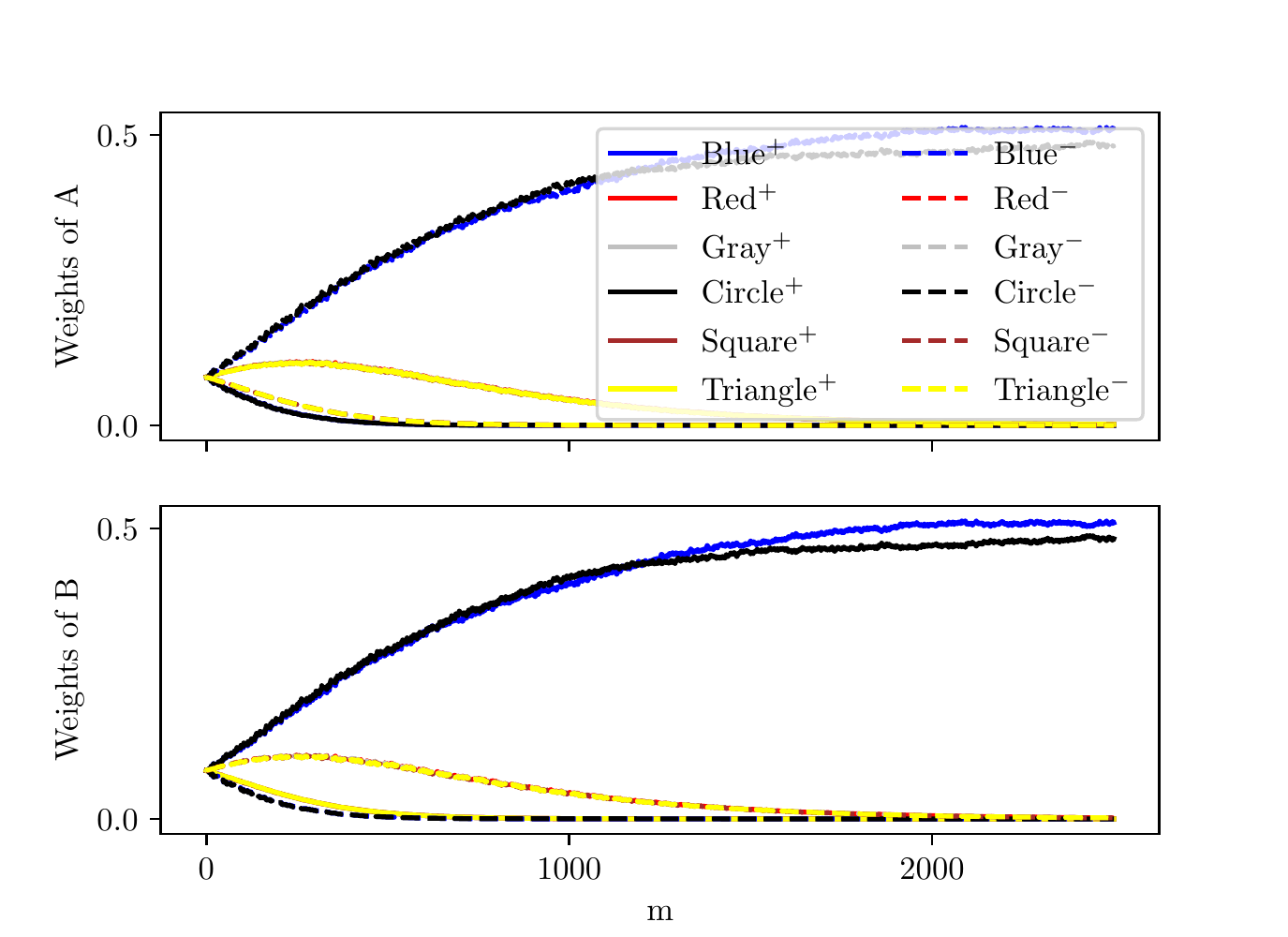}
    \end{minipage}}
    
    \caption{Numerical results with $M=2502$, $K=1$, $N=1000$, $p=0.2$, $q=0.3$, $\alpha^A=0.2$, $\alpha^B=0$, $\beta^j=2$ and $\eta^j=\frac{1}{\abs{\mathcal{O}}}(2\frac{\ln(\abs{I^j})}{M})^{-1/2}$. Parameters of Component-Cue: $\lambda_w=0.005$ and $\phi=10$ (see details in Appendix \ref{app cc}). On the left, evolution of the proportion of correct classifications for HAN and HAN Solo with EWA and PWA and Component-Cue with time. A number $100$ of realizations were made; for each realization, a testing set of $500$ objects drawn randomly was generated. Then the network was trained for $278$ epochs, an epoch being a random sequence of the $9$ nature of objects. After each epoch the weights were frozen and the network performance was evaluated on the testing set. On the $x$-axis, number of epochs. On the $y$-axis, proportion of correctly classified objects of the testing set with confidence interval of level $0.9$. On the right, evolution of the weights of neurons $A$ and $B$ with time for one realization of HAN with EWA.}
    \label{curves}
\end{figure*}

In this section, we assume that the processes emitted by input neurons are mutually independent. There are $c$ characteristics, each declined in $n$ features. Therefore, a nature of object is
identified with $c$ given features, one for each characteristic, and there are $n$ 
choices for each feature. We consider two classes, class $B$, containing one nature of object, 
and class $A$, containing every possible other nature of object. Class $B$ represents an exception.
The features are denoted by $f_{k,l}$, where $k\in \{1,\dots, c\}$ and $l\in \{1,\dots, n\}$, and for each feature $f_{k,l}$, there is one input neuron, 
  also denoted $f_{k,l}$, which spikes with probability $p$ when presented with an 
  object having the feature $f_{k,l}$. Each nature of object is presented the same amount of times to the network. \\
$\bullet$ \textbf{HAN}: we study the case  $I^j_+=I^j_-=I$, $\varphi = (\cdot)_+ \wedge 1$, 
$\alpha^B=0$ and $K=1$. Then under some assumptions the limit of the weights, computed in Appendix \ref{specific HAN},
 is a feasible weight family and the network correctly classify the objects asymptotically. \\
$\bullet$ \textbf{HAN Solo}: we study the case $I^j_+=I$, $I^j_-=\emptyset$, $\varphi = \text{Id}$, 
$\alpha^j=0$. To replace inhibition, we add input neurons to the network: for each feature $f_{k,l}$,
we add a neuron $\Tilde{f}_{k,l}$ which spikes with probability $q$ when presented with an object which does not 
have feature $f_{k,l}$. Hence, neuron $f_{k,l}$ detects the presence of feature $f_{k,l}$, 
and neuron $\Tilde{f}_{k,l}$ detects its absence. Then under some assumptions the limit of the weights, computed in Appendix \ref{specific HAN Solo},
is a feasible weight family and the network correctly classify the objects on average and asymptotically. 

\subsection{Numerical results} \label{numerical results}

To illustrate this specific case, we use $c=2$ characteristics with $n=3$ features for each: the shape, corresponding to the features circle, square and triangle, and the color, corresponding to the features blue, gray and red. The classes are $A=\{{\color{blue}\Box}, {\color{blue} \triangle}, {\color{gray} \bigcirc}, {\color{gray} \Box}, {\color{gray} \triangle}, {\color{red} \bigcirc}, {\color{red} \Box}, {\color{red} \triangle}\}$ and $B=\{{\color{blue} \bigcirc}\}$.

\begin{table*}[h]
\caption{Ablation study} \label{ablation}
\begin{center}

\begin{tabular}{|c|c|c|c|c|} 
 \hline
 Ablated features & HAN EWA & HAN PWA & HAN Solo EWA & HAN Solo PWA\\ 
 \hline
$0/6$ &  $99.9 $& $99.5 $& $99.4 $ & $98.6 $ \\ 
$1/6$ &  $93.0 $& $92.1 $& $92.8 $& $90.1 $\\ 
$2/6$ &  $88.6 $ & $87.1 $& $82.2 $& $81.6 $\\ 
$3/6$ &  $83.4 $ & $85.2 $& $74.8 $& $68.6 $\\ 
$4/6$ &  $84.5 $ & $84.2 $& $58.7 $& $51.9 $\\ 
$5/6$ &  $84.9 $ & $85.1 $& $55.8 $& $52.5 $\\ 
 \hline
\end{tabular} 

\end{center}
\end{table*}

The evolution of the proportion of correct classifications of HAN and HAN Solo with the setting of section \ref{framework ex} for both EWA and PWA and Component-Cue are visible on the left of Figure \ref{curves} (for a description of Component-Cue and its parameters see Appendix \ref{app cc}). For both PWA and EWA, HAN reaches perfect performance faster than HAN Solo: the use of non-linear $\varphi$ and inhibition was more effective than the addition of neurons coding for the absence of features. Besides, for both HAN and HAN Solo, PWA learns faster than EWA but its variance is higher and seems to grow in time, whereas the one of EWA decays. Component-Cue is the only one which does not achieve perfect performance at the end of the learning phase, and its variance does not seem to evolve.

 The evolution of the weights for one realization of HAN with EWA is visible on the left of Figure \ref{curves}, which illustrates Theorem \ref{theorem cvg feasible weight family}: the weights of $A$ converge to uniform distribution on connections Blue$^-$ and Circle$^-$, and the weights of $B$ converge to uniform distribution on connections Blue$^+$ and Circle$^+$, which is a feasible weight family. Hence $A$ is inhibited by neurons coding for features of ${\color{blue} \bigcirc}$, which is the object of class $B$, whereas $B$ is excited by these same neurons.

In Table \ref{ablation}, we provide an ablation study to see how HAN and HAN Solo perform with missing input neurons. The Table gives the percentage of correct classifications for each model at the end of the learning phase depending on the number of ablated features, with the same parameters as in Figure \ref{curves}. One ablated feature corresponds to one ablated neuron (resp. two ablated neurons) for HAN (resp. HAN Solo): if feature $f_{k,l}$ is ablated, then neuron $f_{k,l}$ is ablated for HAN and neurons $f_{k,l}$ and $\Tilde{f}_{k,l}$ are ablated for HAN Solo. We can see that the network performance is comparable when using either EWA or PWA for both HAN and HAN Solo. However, HAN seems to perform better than HAN Solo with ablated features for both EWA and PWA.

Figure \ref{percep} shows a comparison with the well-known perceptron learning algorithm. The performance of the perceptron is comparable to that of HAN with PWA. It should be noted that the comparison is not fair since the perceptron does not involve spikes and is designed for performance, whereas HAN is designed to be cognitively relevant.

\section{CONCLUSION}
In this paper, we introduced a Hawkes network that provably learns to classify objects thanks to a local learning rule using an expert aggregation method. 
The main point of our paper is to rigorously prove why our Hawkes network learns. Indeed, our learning rule led to an algorithm (HAN) allowing us to prove an oracle inequality on the network discrepancy in the case of linear Hawkes process, and even limits and rates of convergence in the general case for a specific expert aggregation algorithm. 
A promising -- but ambitious -- line of research is to understand if such local rules can be generalized for Hawkes network with one, or more, hidden layers. A first step in this direction could be to add an intermediate layer with neurons detecting correlations in the activity of feature neurons.
Another line of research could be to try to prove similar regret bounds for STDP, three-factors or rate-based learning rules.

\begin{figure} 
   \hspace{-0.5cm}
    \centering
    \includegraphics[width=7cm]{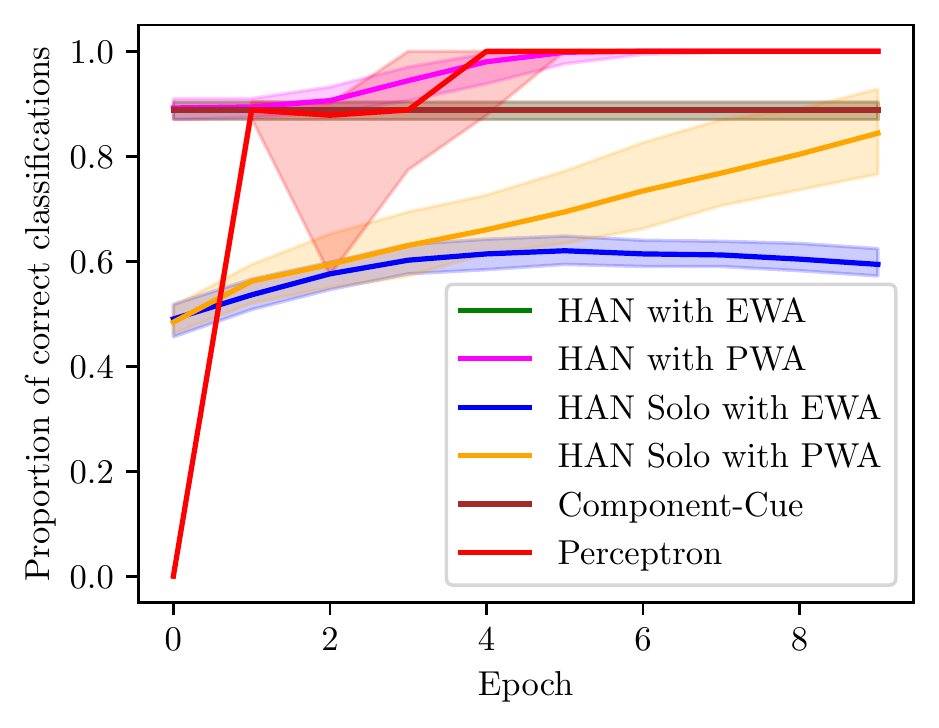}
    \caption{Comparison with the perceptron learning algorithm. Same parameters as in Figure \ref{curves}, learning rate $1$ for the perceptron. Zoom on the $10$ first epochs.}
    \label{percep}
\end{figure}


\section*{Acknowledgment} 
This research was supported by the French government, through CNRS (eXplAIn team), the UCA$^{Jedi}$ and 3iA Côte d'Azur Investissements d'Avenir managed by the National Research Agency (ANR-15 IDEX-01 and ANR-19-P3IA-0002), directly by the ANR project ChaMaNe (ANR-19-CE40-0024-02) and GraVa (ANR-18-CE40-0005), and finally by the interdisciplinary Institute for Modeling in Neuroscience and Cognition (NeuroMod).

\bibliography{papier}

\section*{Checklist}


 \begin{enumerate}

 \item For all models and algorithms presented, check if you include:
 \begin{enumerate}
   \item A clear description of the mathematical setting, assumptions, algorithm, and/or model. \textbf{Yes.}
   \item An analysis of the properties and complexity (time, space, sample size) of any algorithm. \textbf{Yes.} HAN/HAN Solo's complexity is directly linked to the number of rv simulation calls.
   \item (Optional) Anonymized source code, with specification of all dependencies, including external libraries. \textbf{Yes.}
 \end{enumerate}

 \item For any theoretical claim, check if you include:
 \begin{enumerate}
   \item Statements of the full set of assumptions of all theoretical results. \textbf{Yes.}
   \item Complete proofs of all theoretical results. \textbf{Yes.}
   \item Clear explanations of any assumptions. \textbf{Yes.}    
 \end{enumerate}

 \item For all figures and tables that present empirical results, check if you include:
 \begin{enumerate}
   \item The code, data, and instructions needed to reproduce the main experimental results (either in the supplemental material or as a URL). \textbf{Yes.}
   \item All the training details (e.g., data splits, hyperparameters, how they were chosen). \textbf{Yes.}
         \item A clear definition of the specific measure or statistics and error bars (e.g., with respect to the random seed after running experiments multiple times). \textbf{Yes.}
         \item A description of the computing infrastructure used. (e.g., type of GPUs, internal cluster, or cloud provider). \textbf{Yes.} Personal computer without GPU (no timing)
 \end{enumerate}

 \item If you are using existing assets (e.g., code, data, models) or curating/releasing new assets, check if you include:
 \begin{enumerate}
   \item Citations of the creator If your work uses existing assets. \textbf{N/A.}
   \item The license information of the assets, if applicable. \textbf{N/A.}
   \item New assets either in the supplemental material or as a URL, if applicable. \textbf{N/A.}
   \item Information about consent from data providers/curators. \textbf{N/A.}
   \item Discussion of sensible content if applicable, e.g., personally identifiable information or offensive content. \textbf{N/A.}
 \end{enumerate}

 \item If you used crowdsourcing or conducted research with human subjects, check if you include:
 \begin{enumerate}
   \item The full text of instructions given to participants and screenshots. \textbf{N/A.}
   \item Descriptions of potential participant risks, with links to Institutional Review Board (IRB) approvals if applicable. \textbf{N/A.}
   \item The estimated hourly wage paid to participants and the total amount spent on participant compensation. \textbf{N/A.}
 \end{enumerate}

 \end{enumerate}

\newpage

\onecolumn
\aistatstitle{Appendix for “Provable local learning rule by expert aggregation for a Hawkes network''}

Appendix \ref{app notations} provides a table of notations, Appendix \ref{app expe} $-$ \ref{app gain} provide additional theoretical and numerical results, and Appendix \ref{proofs} provides the proofs of our results.

\section{TABLE OF NOTATIONS} \label{app notations}

All the notations are listed in Table \ref{tab_notations}.

\begin{table}[h]
\caption{Notations} \label{tab_notations}
\begin{center}
\begin{tabular}{ll}
\textbf{NOTATION}  &\textbf{DESCRIPTION} \\
\hline \\
 $\mathcal{O}$ & set of objects \\
    
     $M$ & total number of objects presented to the network \\
    
     $N$ & number of time steps during which one object is presented \\
   
     $m$ & index of the current round \\
   
     $o$ & object in $\mathcal{O}$ \\
   
     $o(m)$ & nature of the object of the $m^{th}$ round \\
    
    $J$ & set of classes and of output neurons \\
   
    $I$ & set of input neurons \\
    
    $j$ & index of an output neuron and of a class \\
    
    $M^j$ & number of rounds with objects belonging to class $j$ \\
    
    $i$ & index of an input neuron \\
   
    $I^j_+$ & set of excitatory input neurons of output neuron $j$ \\
  
    $I^j_-$ & set of inhibitory input neurons of output neuron $j$ \\
  
$i^\bullet$ & connection $i^\bullet\in \{i^+,i^-\}$ \\
   
     $I^{+/-}$ & set of signed input neurons, \ie all possible connections to output neurons \\
   
 $I^j$ & set of connections to output neuron $j$ \\
 
     $\varphi$ & activation function of the Hawkes process \\
   
     $K$ & size of the support of functions $g_+$ and $g_-$ \\
   
     $\alpha^j$ & spontaneous activity of output neuron $j$ \\
   
    $X_{m,t}^i$ & activity of neuron $i$ during time step $t$ of round $m$. \\
   
    $p^i_m$ & spiking probability of input neuron $i$ during round $m$ \\
   
    $\mathcal{Q}$ & set of feasible weight families  \\
   
    $p^i_o$ & spiking probability of input neuron $i$ when presented with object $o$    \\
   
    $p^{j,\text{cond}}_{m,t}$ & conditional spiking probability of output neuron $j$ knowing the past \\
    
    $w^{i^\bullet\to j}_m$ & synaptic weight of connection $i^\bullet$ of neuron $j$ during round $m$ given by HAN \\
    
    $w_m^j$ & weight family of neuron $j$ during round $m$ given by HAN: $(w^{i\to j}_m)_{i\in I^j}$ \\
   
    $q^{i^\bullet\to j}$ & constant synaptic weight of connection $i^\bullet$ of $j$ \\
    
    $q^j$ & constant weight family of neuron $j$: $(q^{i^\bullet\to j})_{i^\bullet\in I_j}$ \\
   
     $g^{i^\bullet\to j}_m$ & gain of connection $i^\bullet$ w.r.t. output neuron $j$ for round $m$ \\
   
     $G^{i^\bullet\to j}_m$ & cumulated gain of connection $i^\bullet$ of output neuron $j$ until round $m$ \\
    
     $G_j^m$ & cumulated gain of output neuron $j$ until round $m$: $\sum_{m'=1}^m \sum_{i^\bullet\in I^j} w^{i^\bullet\to j}_m g^{i^\bullet\to j}_{m'}$ \\
     
     $\eta^j$ & learning rate of EWA \\
   
     $\beta^j$ & parameter of PWA \\
   
    $A$, $B$ & classes of the specific case \\
   
    $c$ & number of characteristics in the specific case \\
   
    $n$ & number of features for each characteristic in the specific case \\
   
    $o^B$ & unique object belonging to class $B$ \\
   
    $f_{k,l}$ & feature of the specific case \\
   
    $f_{k,l}, \Tilde{f}_{k,l}$ & neurons of the specific case \\
   
    $p$ & spiking probability of $f_{k,l}$ neurons when active \\
   
    $q$ & spiking probability of $ \Tilde{f}_{k,l}$ neurons when active \\
\end{tabular}
\end{center}
\end{table}

\section{ADDITIONAL EXPERIMENTS} \label{app expe}

On Figure \ref{fig prob}, the evolution of the empirical spiking probabilities (\ie $\frac{N^A_m}{N}$ and $\frac{N^B_m}{N}$) with time for each nature of object is visible, for the realization of HAN Solo with EWA illustrated on the left of Figure \ref{curves}.

 We can see that neuron $B$ does not spike at all when presented with objects having no feature in common with the blue circle (\ie the red square, red triangle, gray square and gray triangle), and spikes less than $A$ when presented with objects having one feature in common (\ie the blue square, blue triangle, red circle and gray circle). At the beginning, $B$ spikes less than $A$ when presented with the blue circle and after some time it starts spiking more. This explains why on the right of Figure \ref{curves}, the curve of HAN with EWA is constant at around $8/9$ ($8$ of the $9$ natures of object are well classified) and after some time rises to $1$.

On Figure \ref{fig random}, we can see the evolution of the proportion of correct classifications for HAN and HAN Solo
with EWA and PWA and Component-Cue with the same parameters as in Figure \ref{curves}, but here
the objects are randomly selected with replacement so the network is not guaranteed
 to see all natures of objects in a given epoch. All the variances are increased compared to Figure \ref{curves}; however, the one of HAN with EWA decreases after some time and HAN with EWA is
 the only algorithm reaching almost perfect performance. Although HAN Solo with EWA 
 has a high variance, its performance is improving and could reach higher values with more time, but the curves of the other algorithms do not seem to be converging towards $1$.

 \begin{figure}[ht]
    \centering
    \includegraphics[width=10cm]{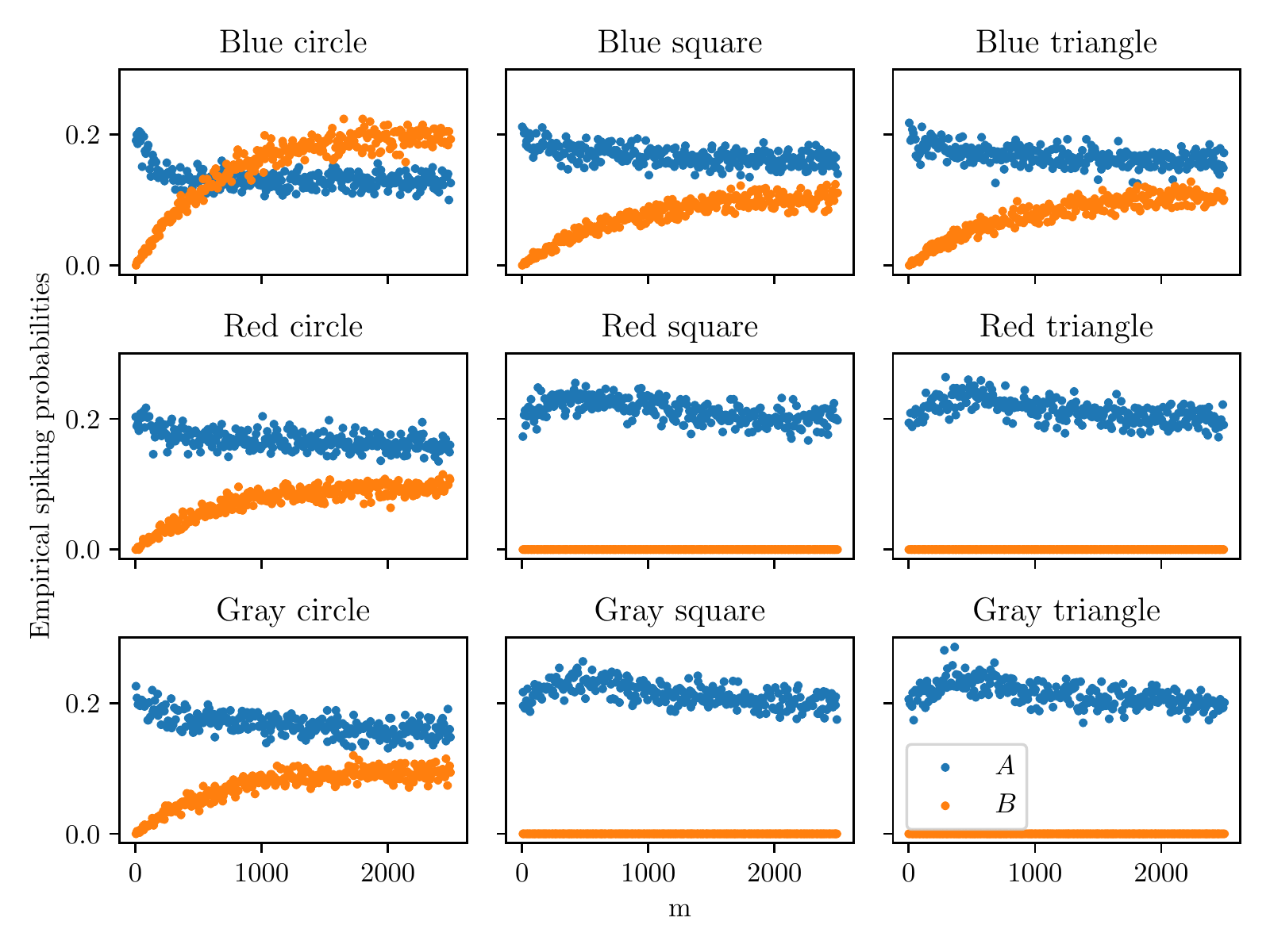}
    \caption{Evolution of the empirical spiking probabilities of neurons $A$ and $B$ with time by nature of object for the same realization of HAN with EWA as in Figure \ref{curves} (left). (Same parameters as in Figure \ref{curves}.) }
    \label{fig prob}
\end{figure}

\begin{figure}[ht]
    \centering
    \includegraphics[width=10cm]{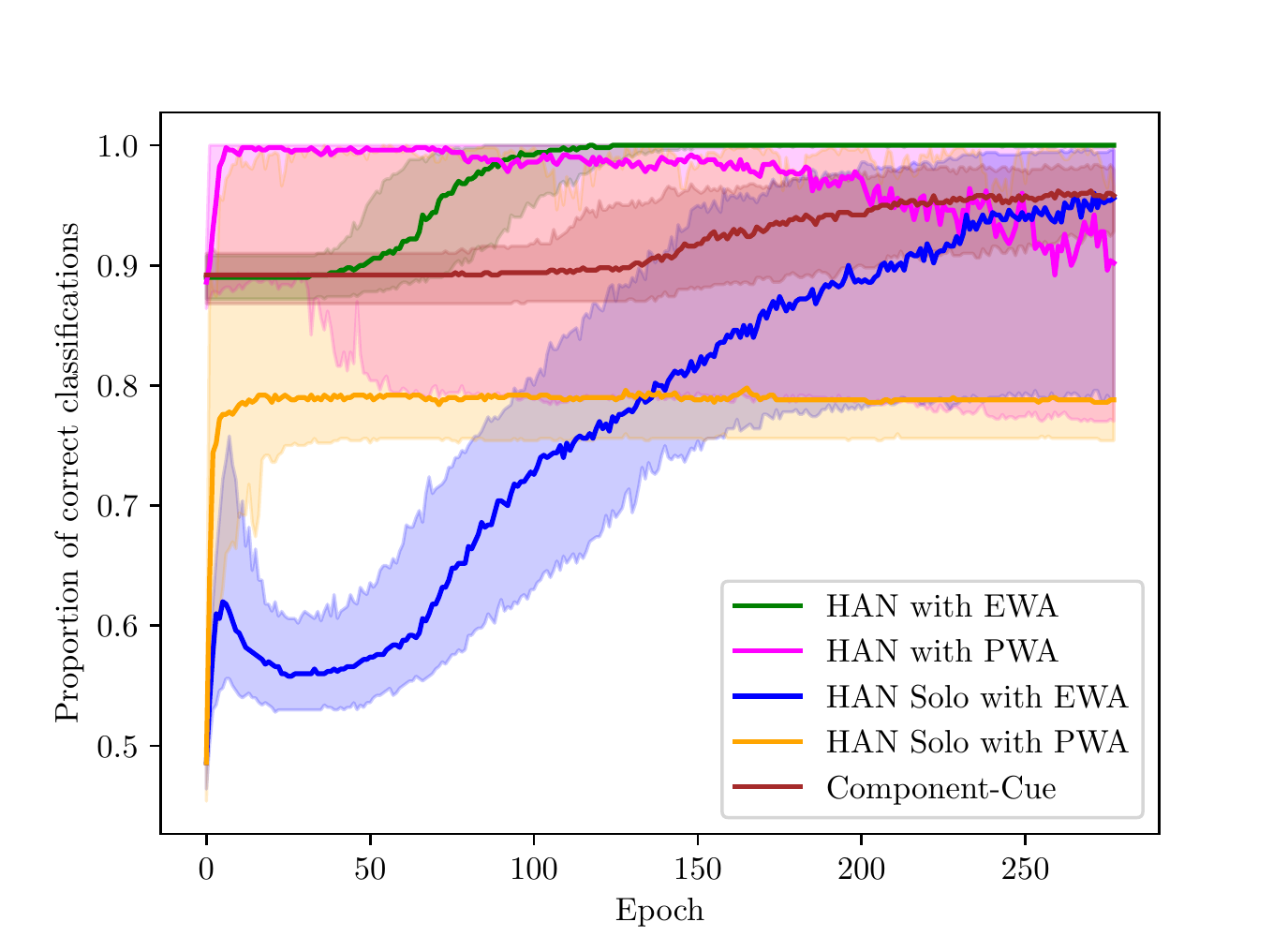}
    \caption{Evolution of the proportion of correct classifications for HAN and HAN Solo
     with EWA and PWA and Component-Cue with time, with the same parameters as in Figure 
     $2$. What changes here is that an epoch is a sequence of $9$ objects drawn randomly 
     with replacement: all the natures of object are not necessarily presented during one
      epoch.}
      \label{fig random}
\end{figure}

\section{DETAILS ABOUT THE LIMIT WEIGHTS OF SECTION \ref{framework ex}} \label{app lim}

\subsection{Study of HAN with EWA} \label{specific HAN}

 Here  $I^j_+=I^j_-=I$, $\varphi = (\cdot)_+ \wedge 1$, $\alpha^B=0$ and $K=1$.

\begin{proposition} \label{prop specific han}
    Suppose each nature of object is presented the same amount of times, $n>2$, $(c-1)p < (1-p)^{c-1}$. Then the conditions of Theorem \ref{theorem cvg feasible weight family} are verified and there exists $\alpha^A>0$ such that the limit weights $w_\infty:=(w_\infty^A,w_\infty^B) $ are a feasible weight family such that $w_\infty^A$ puts the weight $c^{-1}$ on every connection $f_{k,1}^-$ and $w_\infty^B$ puts the weight $c^{-1}$ on every connection $f_{k,1}^+$. Besides, 
    \[\text{Disc}_{\text{safe}}(w_\infty) = \min\Big\{\alpha^A(1-p)^{c-1} - \frac{c-1}{c}p, p-\alpha^A(1-p)^c\Big\}.\]
\end{proposition}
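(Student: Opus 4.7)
The plan is to identify $w_\infty$ via Theorem \ref{theorem cvg feasible weight family}, then verify feasibility and compute the safety discrepancy by explicit calculation, exploiting that $K=1$ together with $\varphi=(\cdot)_+\wedge 1$ makes $p^j_o(w_\infty^j)$ entirely explicit.

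First I compute the feature discrepancies. Since $p^{f_{k,l}}_o=p$ if $o$ has feature $f_{k,l}$ and $0$ otherwise, and the $n^c$ natures are presented equally often, elementary counting yields, for $j=B$, the values $d^{f_{k,1}^+\to B}=pn^{c-1}(n-1)/(n^c-1)$ and $d^{f_{k,l}^-\to B}=pn^{c-1}/(n^c-1)$ for $l>1$, together with the opposite signs for the other connections. The hypothesis $n>2$ makes $n-1>1$, hence $d^{f_{k,1}^+\to B}$ is the strict maximum; the picture for $j=A$ is symmetric. This identifies $\tilde I^B=\{f_{k,1}^+\}_{k=1}^c$ and $\tilde I^A=\{f_{k,1}^-\}_{k=1}^c$, both of cardinality $c$, with strictly positive gaps $\gamma^B,\gamma^A$, so the non-degenerate case of Theorem \ref{theorem cvg feasible weight family} gives $w_\infty$ in the claimed form.

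Next I evaluate $p^j_o(w_\infty^j)=\mathbb{E}[p^{j,\text{cond}}_{m,t}(w_\infty^j)]$ from \eqref{Hawkes} with $K=1$. For $B$, $\alpha^B=0$ and the argument of $\varphi$ is $c^{-1}\sum_{k=1}^c X^{f_{k,1}}_{m,t-1}\in[0,1]$, so $\varphi$ is inactive and, using the independence of input neurons assumed in Section \ref{framework ex}, $p^B_o(w_\infty^B)=sp/c$ where $s$ denotes the number of features that $o$ shares with $o^B$. For $A$, the argument is $\alpha^A-c^{-1}\sum_k X^{f_{k,1}}_{m,t-1}$; constraining $\alpha^A<1/c$ forces this quantity to be non-positive as soon as any $X^{f_{k,1}}=1$, so $p^A_o(w_\infty^A)=\alpha^A(1-p)^s$.

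To finish, feasibility of $w_\infty$ is the conjunction of $p>\alpha^A(1-p)^c$ (case $o^B\in B$) and $\alpha^A(1-p)^s>sp/c$ for every $s\in\{0,\dots,c-1\}$ (case $o\in A$); monotonicity of $s\mapsto\alpha^A(1-p)^s-sp/c$ collapses the latter into the worst case $s=c-1$, yielding $\alpha^A>(c-1)p/(c(1-p)^{c-1})$. Combined with $\alpha^A<1/c$, the admissible interval is nonempty precisely when $(c-1)p<(1-p)^{c-1}$, the hypothesis of the proposition; the additional constraint $\alpha^A<p/(1-p)^c$ is compatible with this interval because $(c-1)(1-p)<c$ always holds. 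Any $\alpha^A$ in this window works, and the safety discrepancy, being the minimum over $(j,o,j')$ of $p^j_o(w_\infty^j)-p^{j'}_o(w_\infty^{j'})$, reduces by the same monotonicity in $s$ to $\min\{\alpha^A(1-p)^{c-1}-(c-1)p/c,\ p-\alpha^A(1-p)^c\}$. The main bookkeeping challenge, rather than any conceptual obstacle, is tracking the $s$-dependent cases; no tools beyond Theorem \ref{theorem cvg feasible weight family} are needed.
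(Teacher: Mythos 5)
Your proposal is correct and follows essentially the same route as the paper: count natures sharing each feature to get the feature discrepancies, use $n>2$ to identify $\tilde I^A=\{f_{k,1}^-\}_k$ and $\tilde I^B=\{f_{k,1}^+\}_k$ via Theorem \ref{theorem cvg feasible weight family}, compute $p^A_o(w_\infty^A)=\alpha^A(1-p)^s$ and $p^B_o(w_\infty^B)=sp/c$ under $\alpha^A<1/c$, and reduce feasibility to the worst case $s=c-1$ to obtain the admissible interval for $\alpha^A$ and the stated safety discrepancy. The only cosmetic slip is attributing the use of independence of the input neurons to the computation for $B$ (where linearity suffices) rather than to the factorization $\mathbb{P}(\text{all }X^{f_{k_l,1}}=0)=(1-p)^s$ needed for $A$; this does not affect correctness.
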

Note that $w_\infty^A$ uniformly distributes weight on inhibitory connections to neurons active when presented with $o_B$, while $w_\infty^B$ uniformly distributes weight on excitatory connections to neurons active when presented with $o^B$, so it is easy to see why it is a feasible weight family. Hence, under the assumptions of Proposition \ref{prop specific han solo}, the conclusion of Corollary \ref{cor} holds: the network correctly classifies the objects asymptotically. 

We can see an illustration of this proposition on Figure \ref{curves}. 

\subsection{Study of HAN Solo with EWA} \label{specific HAN Solo}

Here $\varphi=\text{Id}$, $\alpha^j=0$, $I^j_-=\emptyset$ and $I^j=I$. 

\begin{proposition} \label{prop specific han solo} Suppose each nature of object is presented the same amount of times, $n\geq 2$,
$p (n-1)^{-1} < q < p (n-1)$ and $q > (c-1)p$.
Then the conditions of Theorem \ref{theorem cvg feasible weight family} are verified and the limit weights $w_\infty:=(w_\infty^A,w_\infty^B)$ are a feasible weight family such that $w_\infty^A$ puts the weight $c^{-1}$ on every neurons $\Tilde{f}_{k,1}$ and $w_\infty^B$ puts the weight $c^{-1}$ on every neurons $f_{k,1}$. Besides, 
 \[\text{Disc}_{\text{safe}}(w_\infty) = \min\Big\{\frac{q -p(c-1)}{c},p\Big\}.\]
\end{proposition}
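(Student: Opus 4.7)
\textbf{Proof proposal for Proposition \ref{prop specific han solo}.}

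The plan is to apply Theorem \ref{theorem cvg feasible weight family}. Since each nature of object is presented the same number of times by hypothesis, and $\eta^j$ is assumed to take the prescribed value, the only substantive work is to identify the sets $\tilde{I}^A$ and $\tilde{I}^B$ of connections with maximal feature discrepancy. Once these are shown to have size $c$, Theorem \ref{theorem cvg feasible weight family} immediately yields the described limit weights $w_\infty$. It then remains to check that $w_\infty$ is a feasible weight family and to compute $\text{Disc}_{\text{safe}}(w_\infty)$ by direct calculation.

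First I would compute $p^i_o$ for every input neuron $i$ and nature $o$: neuron $f_{k,l}$ spikes with probability $p$ when $o$ has feature $f_{k,l}$ and $0$ otherwise, while $\tilde{f}_{k,l}$ spikes with probability $q$ when $o$ does not have feature $f_{k,l}$ and $0$ otherwise. Since $|J|=2$, the feature discrepancies satisfy $d^{i\to A}=-d^{i\to B}$, so it suffices to compute $d^{i\to B}=p^i_{o^B}-\frac{1}{n^c-1}\sum_{o\in A}p^i_o$. A direct count of the class-$A$ objects containing (resp.\ not containing) each feature gives, writing $N:=n^{c-1}/(n^c-1)$:
\begin{align*}
d^{f_{k,1}\to B}&=p(n-1)N,\quad d^{f_{k,l}\to B}=-pN\ (l\neq 1),\\
d^{\tilde f_{k,1}\to B}&=-q(n-1)N,\quad d^{\tilde f_{k,l}\to B}=qN\ (l\neq 1).
\end{align*}
The hypothesis $q<p(n-1)$ ensures $p(n-1)N>qN$, so the $f_{k,1}$'s strictly dominate all other connections, giving $\tilde I^B=\{f_{k,1}\}_{k=1}^c$. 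Symmetrically, the hypothesis $q(n-1)>p$ (equivalently $p(n-1)^{-1}<q$) yields $\tilde I^A=\{\tilde f_{k,1}\}_{k=1}^c$. Applying Theorem \ref{theorem cvg feasible weight family} then gives the claimed $w_\infty$.

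Finally I would verify feasibility and compute the safety discrepancy. In the linear HAN Solo setting, $p^j_o(w_\infty^j)=\sum_i w_\infty^{i\to j}p^i_o$. On the one hand $p^B_{o^B}(w_\infty^B)=\frac{1}{c}\sum_k p = p$, while $p^A_{o^B}(w_\infty^A)=0$ since every $\tilde f_{k,1}$ is silent on $o^B$. On the other hand, for $o\in A$ write $a(o):=\#\{k:\ o\text{ has feature }f_{k,1}\}\in\{0,\dots,c-1\}$; then
\[
p^B_o(w_\infty^B)=\frac{p\,a(o)}{c},\qquad p^A_o(w_\infty^A)=\frac{q\,(c-a(o))}{c},
\]
so $p^A_o(w_\infty^A)-p^B_o(w_\infty^B)=[q(c-a(o))-p\,a(o)]/c$, a decreasing function of $a(o)$ minimized at $a(o)=c-1$, with value $(q-p(c-1))/c$, which is positive by the hypothesis $q>(c-1)p$. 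Combining both cases,
\[
\text{Disc}_{\text{safe}}(w_\infty)=\min\!\Big\{p,\ \tfrac{q-p(c-1)}{c}\Big\},
\]
as claimed. The proof is essentially a bookkeeping exercise; the only mildly delicate point is making sure the three hypotheses on $p,q,n,c$ are each used in exactly the right place (the two inequalities on $q/p$ select $\tilde I^A$ and $\tilde I^B$, while $q>(c-1)p$ is precisely what makes $w_\infty$ feasible rather than merely a limit).
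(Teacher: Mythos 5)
Your proposal is correct and follows essentially the same route as the paper's own proof: compute the feature discrepancies of the neurons $f_{k,l}$ and $\tilde f_{k,l}$, use the two inequalities on $q/p$ to identify $\tilde I^A=\{\tilde f_{k,1}\}_k$ and $\tilde I^B=\{f_{k,1}\}_k$, invoke Theorem \ref{theorem cvg feasible weight family}, and then check feasibility and the value of $\text{Disc}_{\text{safe}}(w_\infty)$ by computing the spiking probabilities as a function of the number of features shared with $o^B$. The only (cosmetic) remark is that your shorthand $N:=n^{c-1}/(n^c-1)$ collides with the paper's use of $N$ for the number of time steps, so it should be renamed.
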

Note that $w_{\infty}$ is very close to the one of section \ref{specific HAN}: $w_{\infty}^B$ is the same and $w_{\infty}^A$ selects neurons detecting absence of features instead of inhibitory connections. Hence, under the assumptions of Proposition \ref{prop specific han solo}, the conclusions of Theorem \ref{theorem oracle} and Corollary \ref{cor} hold: the network correctly classifies the objects in average and asymptotically.

\section{REGRET} \label{app regret}
\subsection{Details about the regret bounds of EWA and PWA} \label{app regret bounds}
\begin{enumerate}
    \item EWA: the regret bound given in \cite{cesa2006prediction}  holds for losses (\ie negative gains) taking value in $[0,1]$, but a more general demonstration for  only assumed to be bounded is given in \cite{stoltz2010agregation}, and provides the following bound for losses taking value in the interval $[a,b]$ for any $a<b\in \mathbb{R}$:
\[R_M^j \leq \frac{\ln(\abs{I^j})}{\eta^j}+ \eta^j \frac{(b-a)^2}{8}M .\]
With $\eta^j = \frac{1}{b-a}\sqrt{8\ln(\abs{I^j})/M}$, we obtain
\[R_M^j \leq (b-a)\sqrt{\frac{M}{2}\ln(\abs{I^j})}.\]
This choice of $\eta^j$ supposes that we know the time horizon $M$ in advance. If it is not the case, we can use a time-dependent learning rate $\eta_m^j = \frac{1}{b-a}\sqrt{8\ln(\abs{I^j})/m}$, which gives the bound
\[R_M^j \leq \sqrt{2M\ln(\abs{I^j})} + \sqrt{\frac{\ln(\abs{I^j})}{8}}\]
which has the same order of magnitude. Theorem \ref{theorem cvg feasible weight family} can also be adapted with this choice of $\eta^j$. Note that under the assumptions of Theorem \ref{theorem cvg feasible weight family}, for all $j$, $\frac{M}{M^j}=\frac{\abs{\mathcal{O}}}{n^j}$ is bounded by $\abs{\mathcal{O}}$ so the gains take value in $[-\abs{\mathcal{O}},\abs{\mathcal{O}}]$. Hence a good choice of $\eta^j$ is 
\[\eta^j = \frac{1}{2\abs{\mathcal{O}}} \sqrt{8\ln(\abs{I^j})/M}.\]
    \item PWA: the regret bound given in \cite{cesa2006prediction} holds for losses (\ie negative gains) taking value in $[0,1]$. It gives the following inequality:
    \[R_M^j \leq \sqrt{(\beta^j-1)\abs{I^j}^{2/\beta^j}M}. \]
    For losses that are only assumed to be taking value in the interval $[a,b]$ for any $a<b\in \mathbb{R}$, we can translate them thanks to the function $x \mapsto \frac{x-a}{b-a}$. We obtain the following bound:
\[R_M^j \leq (b-a) \sqrt{(\beta^j-1)\abs{I^j}^{2/\beta^j}M} \] 
The bound is optimal for $\beta^j=2\ln(\abs{I^j})$, which gives
    \[R_M^j \leq (b-a)\sqrt{e(2\ln(\abs{I^j})-1)M}. \]
\end{enumerate}
Then both bounds can be written in the form of Assumption \ref{assumption reg} by bounding $\abs{I^j}$ by $\abs{I}$.

\subsection{Interpretation of the regret} \label{app interpretation}

In the HAN Solo framework, the regret can be interpreted in terms of the neurons activity. The \emph{neuronal discrepancy} of neuron $j$ in a network governed by weights $q^j_{1:M}$ is defined by
\begin{equation}\label{neur disc}
    \text{disc}^j_M(q^j_{1:M}):=\widehat{P^{j,j}_M(q^j_{1:M})} - \dfrac{1}{\abs{J}-1}\sum\limits_{j'\neq j} \widehat{P^{j,j'}_M(q^j_{1:M})}.
\end{equation}

 It is the difference between the average estimated spiking probability of neuron $j$ over the objects belonging to class $j$ and the average estimated spiking probability of neuron $j$ over objects belonging to other classes, normalised by the number of objects of each class. It gives information about how much neuron $j$ spikes more than usual when presented objects in category $j$ and when the weights $q^j_{1:M}$ are used. It is a local information (because at neuron $j$). 

Since $ \sum\limits_{m=1}^M \sum\limits_{i\in I^j} q^{i\to j}_m g^{i\to j}_m= M \text{disc}^j_M(q^j_{1:M})$, we have the following {\it interpretation of regret in terms of discrepancy}:
\begin{equation*}
\frac{R^j_M}{M} = \max_{q^j\in \mathcal{X}^j} \text{disc}^j_M(q^j) - \text{disc}^j_M(w^j_{1:M})
\end{equation*}
 where $q^j$ is identified with the constant family of weights $(q^j)_{1\leq m \leq M}$. Therefore, the regret gives information about the proximity of the 
neuronal discrepancy of neuron $j$ under HAN Solo with 
the maximum possible neuronal discrepancy of neuron $j$ with constant weights. Note that this interpretation is made possible thanks to the choice $\varphi = \text{Id}$. This is local information because it is only about the activity of neuron $j$. To understand the global behavior of the network, we need the class and network dicrepancies.

\section{DETAILS ABOUT COMPONENT-CUE} \label{app cc}

The original Component-Cue algorithm \citep{gluck1988conditioning} is an artificial neural network, with three layers: an input layer receiving the stimuli, an intermediate layer decomposing the stimuli into several features, and an output layer made of category nodes. The intermediate and output layers are linked by weights, that the network updates to learn to classify objects.

Let us detail the Component-Cue algorithm. Let $I$ be the set of features (which is also the set of intermediate neurons), $J$ the set of classes (which is also the set of output neurons), $w^{i\to j}_m$ the synaptic weight between neurons $i\in I$ and $j\in J$ when presented with the $m^{th}$ object. Let 
\[a^i_m := \left\{
    \begin{array}{ll}
    1 \text{ if object $o(m)$ has feature $i$} \\
    0 \text{ otherwise.}
    \end{array}
\right. \]
When presented with the $m^{th}$ object, the output neuron $j$ is activated by the quantity 
\[O_m^j := \sum_{i\in I} a^i_m w^{i\to j}_m\]
and object $o(m)$ is classified in class $j$ with probability
\[\frac{e^{\phi O_m^j}}{\sum\limits_{l \in J} e^{\phi O^l_m}}\]
where $\phi$ is a parameter to choose. Then the weights are updated according to the formula
\[w^{i\to j}_{m+1}= w^{i\to j}_m + \lambda_w a^i_m (\tau_m^j - O_m^j)\]
where $\tau_m^j := \left\{
    \begin{array}{ll}
    1 \text{ if $o(m)\in j$ } \\
    -1 \text{ otherwise}
    \end{array}
\right. $ and $\lambda_w$ is the learning rate. It is a gradient descent step.

 Note that the choice of the parameters in Component-Cue is tricky, and that the behavior of the algorithm (learning or not) highly depends on this choice (see details in \cite{mezzadri:tel-03219311}).

 \textbf{Comparison with HAN:} The structure of Component-Cue is very similar to the one of our network; however, Component-Cue does not have a spiking neuronal network interpretation. Indeed, in HAN, spike trains replace the quantities $a^i_m$, which are real numbers. Besides, unlike HAN, Component-Cue has no theoretical guarantee to correctly classify the objects. 

\section{KALIKOW DECOMPOSITION} \label{sec Kalikow}

Let us detail Kalikow decomposition in the case of a general discrete-time linear Hawkes process without inhibition \citep{ost2020sparse}. Let $I$ the set of neurons, $j\in I$ a neuron. Then the spiking probability of neuron $i$ at time $0$ knowing the past is

\[p_i(X) = \nu_i + \sum_{s\in \mathbb{Z}_-^*} h_{j\to i}(-s) X_{j,s}\]
where $X=(X_{j,s})_{j\in I, s\in \mathbb{Z}^*_-}$ is the network past activity, $\nu_i \geq 0$ is the spontaneous activity of neuron $i$, and the functions $h_{j\to i}$ are such that
\begin{itemize}
    \item $h_{j \to i}(s) \geq 0$ for $s\in \mathbb{N}^*$
    \item $\sum_{s \in \mathbb{N}} h_{j\to j}(s) + \nu_i \leq 1$.
\end{itemize}
Then the Kalikow decomposition of $p_i(X)$ is
\[
p_i(X)=\lambda_i(\emptyset)p_i^\emptyset + \sum_{v\in \mathcal{V}, v\neq \emptyset} \lambda_i(v)p_i^v(X)
\]
where $\mathcal{V}= \{\{(j,s)\}, (j,s)\in I\times \mathbb{Z}^*_- \}\cup \{\emptyset\}$ is a family of neighborhoods, $\lambda_i(v)$ is non negative, $\lambda_i(\emptyset) = 1 - \sum_{s \in \mathbb{N}} h_{j\to j}(s)$, $p_i^\emptyset = \frac{\nu_i}{\lambda_i(\emptyset)}$, $\lambda_i(\{(j,s)\}) = h_{j\to i}(-s)$ and $p_i^{\{j,s\}}(X) = X_{j,s}$. 

Then the Kalikow decomposition of $p_i(X)$ is such that $\lambda_i(v) \geq 0$ for all $v\in \mathcal{V}$, $\sum_{v\in \mathcal{V}} \lambda_i(v) = 1$, $p_i^\emptyset \geq 0$ and $p_i^v(X) \geq 0$ for all $v$. Hence, thanks to Kalikow decomposition, the activity of neuron $i$ can be simulated the following way.
\begin{itemize}
    \item A neighborhood $v\in \mathcal{V}$ is drawn thanks to the probability distribution $\lambda$.
    \item Then neuron $i$ spikes with probability $p_i^v(X)$ (resp. $p_i^\emptyset$ if $v=\emptyset)$.
\end{itemize}

In the HAN Solo case, there is no spontaneous activity. The neighborhoods are the tuples $\{(i,s)\}$ where $i\in I^j$ and $s\in \{t-1,\dots, t-K\}$.

\section{OTHER POSSIBLE GAIN} \label{app gain}

In the HAN Solo framework, another choice of gain is possible: 
\begin{align*} 
    &g^{i\to j}_m = \left\{
    \begin{array}{lll}
          \frac{N^{i\to j}_m}{N w^{i\to j}_m} \times \frac{M}{M^j} &\text{if } o(m)\in j  \\ 
          \\
        -\frac{N^{i\to j}_m}{N w^{i\to j}_m} \!\times\! \frac{M}{M^{j'}} \!\times\! \frac{1}{\abs{J}-1} &\text{if } o(m) \in j' \neq j
    \end{array}
\right.
\end{align*}
if $w^{i\to j}_m > 0$ and
\[g^{i\to j}_m =0\]
otherwise, where $N_m^{i\to j}$ is the amount of times neuron $j$ spiked after choosing 
input neuron $i$ in Kalikow decomposition when presented with the $m^{th}$ object.
Note that $\frac{N^{i\to j}_m}{N w^{i\to j}_m}$ is also an estimator of $p_m^i$: indeed, 
knowing the weights $w^j_m$, the variable $N^{i\to j}_m$ follows a binomial distribution 
with parameter $N$ and $p_m^i w_m^{i\to j}$. 
Using this gain, the estimator that we consider for $p_m^i$ is
\[ \widehat{p^i_m} = \frac{N^{i\to j}_m}{N w^{i\to j}_m}.\]
The neuronal discrepancy \eqref{neur disc} becomes
\[\text{disc}^j_M(w^j_{1:M}) = \frac{1}{M^j}\sum_{m, \text{ }o(m)\in j} \frac{N^j_m}{N}
  - \frac{1}{\abs{J}-1} \sum_{j'\neq j} \frac{1}{M^{j'}}\sum_{m, \text{ }o(m)\in j'} \frac{N^{j}_m}{N} \]
where $N^k_m$ is the number of spikes emitted by neuron $k$ during the presentation of the $m^{th}$ object, and the class discrepancy becomes 
 \[\text{Disc}^j_M(w_{1:M}) = \frac{1}{M^j}\sum_{m, \text{ }o(m)\in j} \frac{N^j_m}{N}
  - \frac{1}{\abs{J}-1} \sum_{j'\neq j} \frac{1}{M^j}\sum_{m, \text{ }o(m)\in j} \frac{N^{j'}_m}{N}.\]
Therefore, with this choice of gain, the class discrepancy directly compares
the number of spikes emitted by output neurons, which gives a better indicator about the network's classifications because the rule to classify objects is precisely about the number of spikes, so its interpretation is easier.

However, this gain causes difficulties because of the division by $w^{i\to j}_m$: when $w^{i \to j}_m$ 
is close to zero, the network behaviour is difficult to study theoretically.
First of all, the gains are not bounded anymore so the regret bounds will depend on the 
supremum of the gains, which can be large. Besides, an error term in 
$O\Big(\frac{\ln(M)}{Nw_m^{i\to j}} + \sqrt{\frac{\ln(M)}{Nw_m^{i\to j}}}\Big)$ 
appears in the regret bound instead of the error term $E(N,M,\alpha)$, which was in $O(\frac{1}{NM})$.
This new error is much worse, and converges to zero only if $N>>\frac{\ln(M)}{w_{i\to j}^m}$.
However, the weights of connections corresponding to experts which does not have optimal gains
tend to converge to zero, as stated in Theorem \ref{theorem cvg feasible weight family}.
Hence, we cannot be assured to be in this favorable regime. Besides, this gain cannot be generalized in the HAN framework.

\section{PROOFS} \label{proofs}
\subsection{Proof of Theorem \ref{theorem oracle}}

We need a preliminary proposition.

\begin{proposition}[Regret bound] \label{prop disc bound}
    Suppose Assumptions \ref{assumption reg} and \ref{assump proportions} hold. Then for all $j\in J$,
\[\text{disc}_M^j(w_{1:M}^j) \geq \max_{q^j\in \mathcal{X}^j}\text{disc}_M^j(q^j) -  E_{\text{reg}}(M) \quad \text{a.s.}\]
where $q^j$ is identified with the constant family of weights $(q^j)_{1\leq m \leq M}$ and
\[E_{\text{reg}}(M):= K(\abs{I},(1+(1+\abs{J})^{-1})\xi^{-1}) M^{-1/2}.\]
\end{proposition}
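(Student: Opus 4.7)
The plan is to reduce the proposition to Assumption \ref{assumption reg} by exploiting the regret-discrepancy equivalence already recorded in Appendix \ref{app interpretation}, namely
\[R^j_M = M\bigl(\max_{q^j \in \mathcal{X}^j}\text{disc}^j_M(q^j) - \text{disc}^j_M(w^j_{1:M})\bigr).\]
Once this identity is in hand, the proposition follows by dividing by $M$, invoking the regret bound of Assumption \ref{assumption reg} on the right-hand side, and pinning down the constant $b-a$ using Assumption \ref{assump proportions}. The work therefore splits into three steps: (i) verify the identity, (ii) locate an interval $[a,b]$ containing all gains, and (iii) assemble.

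For step (i), I would expand $\text{disc}^j_M(q^j_{1:M})$ using $\widehat{p^j_m(q^j_m)} = \sum_{i \in I^j} q^{i\to j}_m \widehat{p^i_m}$, partition the sum over $m$ according to whether $o(m)\in j$ or $o(m)\in j'\neq j$, and factor out $1/M^j$ and $1/((|J|-1)M^{j'})$. After multiplying by $M$, the surviving factors $M/M^j$ and $-M/((|J|-1)M^{j'})$ are exactly those appearing in the gain formula \eqref{formule credit}, yielding $M\,\text{disc}^j_M(q^j_{1:M}) = \sum_{m=1}^M \sum_{i \in I^j} q^{i\to j}_m g^{i\to j}_m$ for any weight family $q^j_{1:M}$. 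Specializing to a constant weight $q^j_m \equiv q^j$ gives the comparator $\sum_{i\in I^j} q^{i\to j} G^{i\to j}_M$, and specializing to $q^j_m = w^j_m$ gives $G^j_M$; subtracting produces the identity.

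For step (ii), since $\widehat{p^i_m}\in[0,1]$ and Assumption \ref{assump proportions} forces $M/M^{j'} \leq 1/\xi$ for every class $j'$, the gain formula \eqref{formule credit} gives the pathwise two-sided bound $g^{i\to j}_m \in [-1/(\xi(|J|-1)), 1/\xi]$, of length $b-a = |J|/(\xi(|J|-1))$, matching the constant in the statement. The gain sequence is random through the Bernoulli input activity, but Assumption \ref{assumption reg} provides a pathwise regret bound valid for any deterministic sequence landing in $[a,b]$; conditioning on the realization of the inputs turns the gain path into a deterministic sequence in $[a,b]$, so $R^j_M \leq K(|I|, |J|/(\xi(|J|-1)))\sqrt{M}$ holds almost surely. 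Combining with the identity of step (i) and dividing by $M$ closes the argument. The only mildly subtle point is the algebraic bookkeeping in step (i); otherwise the proof is essentially a translation of the regret bound into the language of discrepancies, so I expect no real obstacle.
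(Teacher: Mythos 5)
Your proposal is correct and follows essentially the same route as the paper: the paper's proof relies on the identity $R^j_M/M = \max_{q^j}\text{disc}^j_M(q^j) - \text{disc}^j_M(w^j_{1:M})$ recorded in Appendix \ref{app interpretation}, bounds the gains in $[-1/(\xi(\abs{J}-1)), 1/\xi]$ via Assumption \ref{assump proportions}, applies Assumption \ref{assumption reg}, and divides by $M$, exactly as you do (your constant $b-a = \abs{J}/(\xi(\abs{J}-1))$ matches the paper's proof and Theorem \ref{theorem oracle}; the $(1+(1+\abs{J})^{-1})$ in the proposition's statement is a typo for $(1+(\abs{J}-1)^{-1})$). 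Your step (i) merely spells out the algebra the paper delegates to Appendix \ref{app interpretation}, and your conditioning remark on the randomness of the gains is a sound way to handle a point the paper passes over silently.
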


\begin{proof}
    According to Assumption \ref{assump proportions}, for all $j'\in J$, $M^{j'}/M\geq \xi$. Hence the gains $g^{i\to j}_m$ take value in $[-\frac{1}{\xi(\abs{J}-1)},\frac{1}{\xi}]$ a.s. so according to Assumption \ref{assumption reg}, we have
    \[R_M^j \leq K(\abs{I},(1+(\abs{J}-1)^{-1})\xi^{-1}) \sqrt{M} \quad \text{a.s.}\]
    We get the result by dividing the previous inequality by $M$.
\end{proof}

Let $q\in \mathcal{Q}$. We want to bound from below $\text{Disc}_M(w_{1:M})$. We have almost surely
\begin{align*}
\text{Disc}_M(w_{1:M})&=\frac{1}{\abs{J}}\sum_{j\in J} \text{Disc}_M^j(w_{1:M})\\
   & =
     \frac{1}{\abs{J}}\sum_{j\in J} \widehat{P^{j,j}_M(w^j_{1:M})} - \frac{1}{\abs{J}}\sum_{j\in J}\frac{1}{\abs{J}-1}\sum_{j'\neq j} \widehat{P^{j',j}_M(w^{j'}_{1:M})}
\end{align*}

Let us exchange the name of the indexes $j$ and $j'$ in the second term.

\begin{align*}
   \text{Disc}_M(w_{1:M})  =& \frac{1}{\abs{J}}\sum_{j\in J} \widehat{P^{j,j}_M(w^j_{1:M})} - \frac{1}{\abs{J}}\sum_{j'\in J}\frac{1}{\abs{J}-1}\sum_{j\neq j'} \widehat{P^{j,j'}_M(w^j_{1:M})}
\end{align*}

Let us exchange the sums in the second term. 

\begin{align*}
    \text{Disc}_M(w_{1:M})=& \frac{1}{\abs{J}}\sum_{j\in J}\widehat{P^{j,j}_M(w^j_{1:M})} - \frac{1}{\abs{J}}\sum_{j\in J}\frac{1}{\abs{J}-1}\sum_{j'\neq j}  \widehat{P^{j,j'}_M(w^j_{1:M})} \\
    =& \frac{1}{\abs{J}}\sum_{j\in J} \Big( \widehat{P^{j,j}_M(w^j_{1:M})} - \frac{1}{\abs{J}-1}\sum_{j'\neq j}  \widehat{P^{j,j'}_M(w^j_{1:M})} \Big) \\
    =&\frac{1}{\abs{J}}\sum_{j\in J} \text{disc}_M^j (w^j_{1:M})\\
    \geq &\frac{1}{\abs{J}}\sum_{j\in J} \Big(\text{disc}_M^j(q^j)-E_{\text{reg}}(M)\Big) 
\end{align*}
thanks to Proposition \ref{prop disc bound}.

\begin{align*}
     \text{Disc}_M(w_{1:M}) \geq & \frac{1}{\abs{J}}\sum_{j\in J} \Big(\widehat{P^{j,j}_M(q^j)} - \frac{1}{\abs{J}-1}\sum_{j'\neq j} \widehat{P^{j,j'}_M(q^j)} \Big)-E_{\text{reg}}(M) \\
     =&\frac{1}{\abs{J}}\sum_{j\in J} \widehat{P^{j,j}_M(q^j)} - \frac{1}{\abs{J}}\sum_{j\in J}\frac{1}{\abs{J}-1}\sum_{j'\neq j} \widehat{P^{j,j'}_M(q^j)}-E_{\text{reg}}(M)
\end{align*}
Let us exchange the sums in the second term.

\begin{align*}
    \text{Disc}_M(w_{1:M}) \geq\frac{1}{\abs{J}}\sum_{j\in J} \widehat{P^{j,j}_M(q^j)}& - \frac{1}{\abs{J}}\sum_{j'\in J}\frac{1}{\abs{J}-1}\sum_{j\neq j'} \widehat{P^{j,j'}_M(q^j)}-E_{\text{reg}}(M)
\end{align*}

Let us exchange the name of the indexes $j$ and $j'$ in the second term.

\begin{align*}
     \text{Disc}_M(w_{1:M}) \geq&\frac{1}{\abs{J}}\sum_{j\in J} \widehat{P^{j,j}_M(q^j)} - \frac{1}{\abs{J}}\sum_{j\in J}\frac{1}{\abs{J}-1}\sum_{j'\neq j} \widehat{P^{j',j}_M(q^{j'})}-E_{\text{reg}}(M) \\
     =&\frac{1}{\abs{J}}\sum_{j\in J} \Big(\widehat{P^{j,j}_M(q^j)} - \frac{1}{\abs{J}-1}\sum_{j'\neq j} \widehat{P^{j',j}_M(q^{j'})} \Big)-E_{\text{reg}}(M) \\
     =&\text{Disc}_M(q)  -E_{\text{reg}}(M)
\end{align*}

Now we want to compare $\text{Disc}_M(q)$ and $\text{Disc}_{\text{safe}}(q)$. We need the following result.

\begin{proposition} \label{prop hoef}
    Let $\alpha>0$. Suppose Assumption \ref{assump proportions} holds. Then
    \[\mathbb{P}\Big(\forall j\in J,  \forall i\in I, \Big|\frac{1}{M^j}\sum_{m, o(m)\in j} 
    (\widehat{p^i_m} - p^i_m) \Big|\leq \sqrt{\ln\Big(\frac{2\abs{I}\abs{J}}{\alpha}\Big)\frac{1}{2\xi NM}}\Big) \geq 1-\alpha.\]
\end{proposition}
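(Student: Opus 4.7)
The plan is to derive this uniform deviation bound by applying Hoeffding's inequality for each pair $(i,j)\in I\times J$ separately, and then combining the $\abs{I}\abs{J}$ individual tail bounds via a union bound. No deeper machinery is needed.

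For a fixed $i\in I$ and $j\in J$, unpacking the definition of $\widehat{p^i_m}$ gives
$$\frac{1}{M^j}\sum_{m,\,o(m)\in j}(\widehat{p^i_m}-p^i_m) \;=\; \frac{1}{M^j N}\sum_{m,\,o(m)\in j}\sum_{t=1}^N(X^i_{m,t}-p^i_m).$$
By Section \ref{sec input}, the spike train of a single input neuron is temporally i.i.d. Bernoulli with parameter $p^i_m$, and the activities across different presentations $m$ are independent as well. Consequently, the $M^j N$ summands above are mutually independent, centered, and bounded in $[-1,1]$. Hoeffding's inequality then yields, for every $\varepsilon>0$,
$$\mathbb{P}\!\left(\left|\frac{1}{M^j}\sum_{m,\,o(m)\in j}(\widehat{p^i_m}-p^i_m)\right|\geq \varepsilon\right) \;\leq\; 2\exp\!\left(-2 M^j N \varepsilon^2\right).$$

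To match the bound in the statement, I would choose $\varepsilon = \sqrt{\ln(2\abs{I}\abs{J}/\alpha)/(2\xi NM)}$. Assumption \ref{assump proportions} provides $M^j\geq \xi M$, which converts the exponent $-2 M^j N \varepsilon^2$ into something bounded above by $-\ln(2\abs{I}\abs{J}/\alpha)$; hence the tail probability for each fixed $(i,j)$ is at most $\alpha/(\abs{I}\abs{J})$. A union bound over the $\abs{I}\abs{J}$ pairs then upgrades this to the stated uniform statement with probability at least $1-\alpha$. The argument is essentially routine: the only point requiring attention is invoking Assumption \ref{assump proportions} at the right moment, so that the dependence on the random class-size $M^j$ is absorbed cleanly into the deterministic constant $\xi M$ in the exponent.
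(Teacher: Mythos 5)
Your proof is correct and follows essentially the same route as the paper: Hoeffding's inequality applied to the independent Bernoulli variables $(X^i_{m,t})$ for each fixed pair $(i,j)$, the bound $M^j\geq \xi M$ from Assumption \ref{assump proportions} to make the exponent deterministic, and a union bound over the $\abs{I}\abs{J}$ pairs with the threshold calibrated so each tail is $\alpha/(\abs{I}\abs{J})$. One tiny imprecision: the relevant range for Hoeffding is that each $X^i_{m,t}$ lies in an interval of length $1$ (not that the centered variables lie in $[-1,1]$, which would cost a factor of $4$ in the exponent), but the constant $2\exp(-2M^jN\varepsilon^2)$ you actually write down is the correct one.
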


\begin{proof}
    Let $j\in J$. The variables $(X_{m,t}^i)_{1\leq t \leq N, m \text{ s.t. } o(m)\in j}$ are independent bounded by $1$, of mean $p_m^i$. Hence, according to Hoeffding's inequality, for all $\beta>0$
    \[\mathbb{P}\Big(\Big|\frac{1}{NM^j}\sum_{m, o(m)\in j}\sum_{t=1}^N (X_{m,t}^i - p_m^i)\Big| \geq \beta \Big) \leq 2 e^{-2\beta^2NM^j}.\]
    According to Assumption \ref{assump proportions}, $M^j\geq \xi M$ so
    \[\mathbb{P}\Big(\Big|\frac{1}{NM^j}\sum_{m, o(m)\in j}\sum_{t=1}^N (X_{m,t}^i - p_m^i)\Big| \geq \beta \Big) \leq 2 e^{-2\beta^2NM\xi}.\]

    Let $D^c$ be the event $\{\exists j\in J, \exists i\in I, |\frac{1}{NM^j}\sum_{m, o(m)\in j} \sum_{t=1}^N (X_{m,t}^i - p^i_m) |\geq\beta \}.$ Then
    \begin{align*}
        \mathbb{P}(D^c) &\leq \sum_{j\in J, i\in I} \mathbb{P}\Big(\frac{1}{NM^j}\sum_{m, o(m)\in j}\sum_{t=1}^N (X_{m,t}^i - p_m^i) \geq \beta \Big)\\
        &\leq 2\abs{I} \abs{J}e^{-2\beta^2NM\xi}.
    \end{align*}
 Let us choose $\beta$ such that $2\abs{I} \abs{J}e^{-2\beta^2NM\xi}=\alpha$, \ie $\beta= \sqrt{\ln\Big(\frac{2\abs{I}\abs{J}}{\alpha}\Big)\frac{1}{2NM\xi}}$.

    On $D$, for all $j\in J, i\in I$
    \[\Big|\frac{1}{M^j}\sum_{m, o(m)\in j} (\widehat{p^i_m} - p^i_m) \Big|\leq \sqrt{\ln\Big(\frac{2\abs{I}\abs{J}}{\alpha}\Big)\frac{1}{2NM\xi}}\]
    so we can conclude.
\end{proof}

Let $\alpha>0$. Let us work on the event $D$ defined in the proof of Proposition \ref{prop hoef}. Let
\[p_m^j(q^j) := \sum\limits_{i\in I^j} q^{i\to j} p^i_m\]
the spiking probability of neuron $j$ with weights $q^j$ when presented with the $m^{th}$ object,
\[P^{j',j}_M(q^{j'}):= \dfrac{1}{M^{j}}\sum\limits_{m, \text{ }o(m)\in j}
  p^{j'}_m(q^{j'}_m)\]
  the average spiking probability of neuron $j'$ when presented with objects in class $j$. 
 For all $j,j'\in J$, 

  \begin{align*}
    |\widehat{P^{j',j}_M(q^{j'})} - P^{j',j}_M(q^{j'})| &\leq \sum_{i\in I^{j'}} q^{i\to j'} |\frac{1}{M^j} \sum_{m, o(m)\in j} ( \widehat{p^i_m } - p^i_m ) |\\
    &\leq \sqrt{\ln\Big(\frac{2\abs{I}\abs{J}}{\alpha}\Big)\frac{1}{2\xi NM}} 
  \end{align*}
Thanks to Proposition \ref{prop hoef}. Let \[\overline{\text{Disc}}^j_{M}(q^j) = P^{j,j}_M(q^j) - \dfrac{1}{\abs{J}-1}\sum\limits_{j'\neq j} P^{j',j}_M(q^{j'})\]
and
\[\overline{\text{Disc}}_M(q) = \frac{1}{\abs{J}}\sum_{j\in J}\overline{\text{Disc}}_M^j(q^j).\]
For all $j\in J$ we have
\[|\overline{\text{Disc}}^j_{M}(q^j) - \text{Disc}^j_{M}(q^j)| \leq 2 \sqrt{\ln\Big(\frac{2\abs{I}\abs{J}}{\alpha}\Big)\frac{1}{ 2\xi NM}}\]
so
\[|\overline{\text{Disc}}_{M}(q) - \text{Disc}_{M}(q) |\leq 2 \sqrt{\ln\Big(\frac{2\abs{I}\abs{J}}{\alpha}\Big)\frac{1}{2\xi NM}}.\]

Besides, since $q$ is a feasible weight family, 
\[\overline{\text{Disc}}_{M}(q) \geq \text{Disc}_{\text{safe}}(q).\]

Finally, 
\[\text{Disc}_M(w_{1:M}) \geq \text{Disc}_{\text{safe}}(q)  - E_{\text{reg}}(M) - E(N,M,\alpha).\]
This is true for any feasible weight family $q$ and $\mathbb{P}(D)\geq 1-\alpha$ so we can conclude.

\subsection{Proof of Theorem \ref{theorem cvg feasible weight family}}

Let
\begin{itemize}
    \item $ \overline{g}^{i^+\to j}_m := \left\{
        \begin{array}{ll}
           p_i^m \times \frac{M}{M^j} &\text{if } o(m)\in j \\
            - p_i^m \times \frac{1}{\abs{J}-1}\times \frac{M}{M^{j'}} &\text{if } o(m)\in j'\neq j
        \end{array}
    \right.$
    \item $\overline{g}^{i^-\to j}_m = - \overline{g}^{i^+\to j}_m $
    \item $\overline{G}^{i^\bullet \to j}_m=\sum\limits_{m'=1}^m \overline{g}^{i^\bullet \to j}_{m'}$
    \item $\overline{w}^{i^\bullet \to j}_{m+1}=\dfrac{\text{exp}(\eta^j \overline{C}_{i^\bullet \to j}^{m})}{\sum\limits_{k\in I_j} \text{exp}(\eta^j \overline{C}_{k \to j}^{m})}.$
\end{itemize}

We need the following proposition:
\begin{proposition} \label{prop bar}
    Suppose each nature of object is presented the same amount of times. Let $\alpha>0$. Then with EWA with $\eta^j=\frac{1}{\abs{\mathcal{O}}}\sqrt{2\frac{\ln(\abs{I^j})}{M}}$, we get
    \[\mathbb{P}\Big(\forall j\in J, i^\bullet \in I^j, |w^{i^\bullet \to j}_{M+1} - \overline{w}^{i^\bullet \to j}_{M+1} | \leq 
    \abs{I^j}\sqrt{\ln\Big(\frac{2\abs{I}\abs{J}}{\alpha}\Big)\frac{\ln(\abs{I^j})}{\abs{\mathcal{O}}N}}\Big) \geq 1-\alpha\]
\end{proposition}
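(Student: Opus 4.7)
The plan is to control $|w^{i^\bullet\to j}_{M+1}-\overline{w}^{i^\bullet\to j}_{M+1}|$ in three stages: (a) use Hoeffding concentration on $\widehat{p^i_m}-p^i_m$ in a suitable averaged sense; (b) propagate this to a bound on the cumulative gain difference $|G^{k\to j}_M-\overline{G}^{k\to j}_M|$ uniformly in $k\in I^j$; and (c) push the resulting perturbation through the softmax defining the EWA update. Only stage (a) involves randomness; stages (b) and (c) are deterministic manipulations on the good concentration event.

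For stage (a), I would essentially reuse Proposition~\ref{prop hoef}. The balanced presentation assumption yields $M^j=n^jM/\abs{\mathcal{O}}\geq M/\abs{\mathcal{O}}$, so Assumption~\ref{assump proportions} holds with $\xi=1/\abs{\mathcal{O}}$. Since the input Bernoullis $X^i_{m,t}$ are independent in $(m,t)$ and do not depend on the weights or on the output activity, Hoeffding together with a union bound over the $\abs{I}\abs{J}$ pairs $(i,j)$ gives, with probability at least $1-\alpha$ and uniformly in $(i,j)\in I\times J$,
\[
\Big|\frac{1}{M^j}\sum_{m:\,o(m)\in j}(\widehat{p^i_m}-p^i_m)\Big|\leq \sqrt{\frac{\abs{\mathcal{O}}\ln(2\abs{I}\abs{J}/\alpha)}{2NM}}.
\]

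For stage (b), I would unfold the gain definition \eqref{formule credit}. For any connection $k\in I^j$ attached to input neuron $i$, with sign $\epsilon_k\in\{\pm 1\}$ according to whether $k=i^+$ or $k=i^-$,
\[
G^{k\to j}_M-\overline{G}^{k\to j}_M = \epsilon_k\Big[\tfrac{M}{M^j}\sum_{m:\,o(m)\in j}(\widehat{p^i_m}-p^i_m) - \tfrac{1}{\abs{J}-1}\sum_{j'\neq j}\tfrac{M}{M^{j'}}\sum_{m:\,o(m)\in j'}(\widehat{p^i_m}-p^i_m)\Big].
\]
On the concentration event of stage (a), each block $\frac{M}{M^{j''}}\big|\sum_{m:\,o(m)\in j''}(\widehat{p^i_m}-p^i_m)\big|$ is bounded by $\sqrt{M\abs{\mathcal{O}}\ln(2\abs{I}\abs{J}/\alpha)/(2N)}$, so summing the two contributions yields $|G^{k\to j}_M-\overline{G}^{k\to j}_M|\leq\sqrt{2M\abs{\mathcal{O}}\ln(2\abs{I}\abs{J}/\alpha)/N}$ uniformly in $k\in I^j$. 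Multiplying by $\eta^j=\abs{\mathcal{O}}^{-1}\sqrt{2\ln(\abs{I^j})/M}$ produces the key estimate
\[
\eta^j\,|G^{k\to j}_M-\overline{G}^{k\to j}_M|\leq 2\sqrt{\frac{\ln(\abs{I^j})\ln(2\abs{I}\abs{J}/\alpha)}{\abs{\mathcal{O}}N}},
\]
in which the $\sqrt{M}$ factors cancel exactly between $\eta^j$ and the gain deviation, as one would hope.

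For stage (c), both $w^{i^\bullet\to j}_{M+1}$ and $\overline{w}^{i^\bullet\to j}_{M+1}$ are the $i^\bullet$-th coordinate of the softmax $\phi_{i^\bullet}(\mathbf{y})=e^{y_{i^\bullet}}/\sum_k e^{y_k}$ applied respectively to $\mathbf{y}=(\eta^j G^{k\to j}_M)_{k\in I^j}$ and $\overline{\mathbf{y}}=(\eta^j\overline{G}^{k\to j}_M)_{k\in I^j}$. Since $|\partial_{y_k}\phi_{i^\bullet}|=\phi_{i^\bullet}|\delta_{i^\bullet k}-\phi_k|\leq 1$, the mean value theorem gives $|\phi_{i^\bullet}(\mathbf{y})-\phi_{i^\bullet}(\overline{\mathbf{y}})|\leq \sum_{k\in I^j}|y_k-\overline{y}_k|\leq \abs{I^j}\max_k\eta^j|G^{k\to j}_M-\overline{G}^{k\to j}_M|$, so combining with stage (b) yields the bound claimed in the proposition up to an absolute constant. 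The main obstacle is arithmetic rather than conceptual: one must track constants through the Hoeffding union bound in (a), the two-term decomposition in (b), and the sum over $\abs{I^j}$ coordinates in the softmax Lipschitz estimate, so that the $\sqrt{M}$ cancels and the final factor $\abs{I^j}$ emerges with the correct logarithmic weights. A secondary subtlety is that Proposition~\ref{prop hoef} is stated in the HAN~Solo framework, but its proof uses only independence of the input Bernoullis and so extends verbatim to the general HAN setting considered here, since inhibition acts on output neurons and leaves the input layer untouched.
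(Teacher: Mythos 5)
Your proof follows essentially the same route as the paper's: concentration of the empirical input rates (Proposition \ref{prop hoef} applied with $\xi=1/\abs{\mathcal{O}}$), propagation to the cumulated gains, and a mean-value/Lipschitz bound on the softmax defining EWA, the only cosmetic difference being that you bound the softmax gradient coordinatewise while the paper uses the Euclidean bound $\lVert \nabla h^l\rVert \leq \eta^j\sqrt{\abs{I^j}}$ together with $\lVert (G^{k\to j}_M)_k-(\overline{G}^{k\to j}_M)_k\rVert \leq \sqrt{\abs{I^j}}\max_k\abs{G^{k\to j}_M-\overline{G}^{k\to j}_M}$, both yielding the factor $\abs{I^j}$. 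Your extra factor $2$ in stage (b) comes from honestly summing the own-class block and the averaged other-class block; the paper's proof passes from Proposition \ref{prop hoef} to $\abs{G^{k\to j}_M-\overline{G}^{k\to j}_M}\leq M\sqrt{\ln(2\abs{I}\abs{J}/\alpha)/(2\xi NM)}$ without that triangle-inequality factor, so your accounting is actually the more careful one and the constant stated in the proposition should arguably carry the extra factor $2$.
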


\begin{proof}
    Let $j\in J$, $i^\bullet \in I^j$, $h^l:\mathbb{R}^{\abs{I^j}}\mapsto \mathbb{R}$ such that $h^l(x_1,\dots,x_{\abs{I^j}}) = \frac{\exp(\eta^j x_l)}{\sum_{k=1}^{\abs{I^j}} \exp(\eta^j x_k)} $. Then for all $(x_1,\dots,x_{\abs{I^j}})$, 
    \[||\nabla h^l(x_1,\dots,x_{\abs{I^j}})||\leq \eta^j \sqrt{\abs{I^j}}.\]

    Besides, according to the mean value theorem, for $i^\bullet \in I^j$
    \begin{align*}
       |w_{i^\bullet \to j}^{M+1} - \overline{w}_{i^\bullet \to j}^{M+1} | &= |h^l((G_{k\to j}^M)_{k\in I^j}) - h^l((\overline{G}_{k\to j}^M)_{k\in I^j})| \\
       &\leq \eta^j \sqrt{\abs{I^j}} ||(G_{k\to j}^M)_{k\in I^j} - (\overline{G}_{k\to j}^M)_{k\in I^j}||.
    \end{align*}

Besides, each nature of object is presented the same amount of times so Assumption \ref{assump proportions} holds with $\xi = \frac{1}{\abs{\mathcal{O}}}$. Hence according to Proposition \ref{prop hoef}, with probability $1-\alpha$ we have that for all $j\in J$, $k \in I^j$,
    \begin{align*}
        |G_{k\to j}^M - \overline{G}_{k\to j}^M| &\leq M \sqrt{\ln\Big(\frac{2\abs{I}\abs{J}}{\alpha}\Big)\frac{1}{2\xi NM}} \\
        &=  \sqrt{\ln\Big(\frac{2\abs{I}\abs{J}}{\alpha}\Big)\frac{M\abs{\mathcal{O}}}{2N}}
    \end{align*}
\ie
  \begin{align*}
       |w^{i^\bullet \to j}_{M+1} - \overline{w}^{i^\bullet \to j}_{M+1} |
       &\leq \eta^j \abs{I^j}  \sqrt{\ln\Big(\frac{2\abs{I}\abs{J}}{\alpha}\Big)\frac{M\abs{\mathcal{O}}}{2N}}.
    \end{align*}
Then we find the result by replacing $\eta^j$ by its value $\frac{1}{\abs{\mathcal{O}}}\sqrt{2\frac{\ln(\abs{I^j})}{M}}$.

\end{proof}

\begin{proposition} \label{prop lim M}
    Suppose each nature of object is presented the same amount of times: for all $o\in \mathcal{O}$, $\abs{\{m, o(m)=o\}}=\frac{M}{\abs{\mathcal{O}}}$. 
Then for all $j\in J$:
\\$\bullet$ if $\tilde{I}^j\neq I^j$, then
$$|\overline{w}^{i^\bullet \to j}_{M+1}-w_\infty^{i^\bullet \to j}|\leq E^j_{\text{EWA}}(M)$$
where 
$$E^j_{\text{EWA}}(M) =\max\Big\{1, \frac{\abs{I^j}}{\abs{\Tilde{I}^j}}-1\Big\}\frac{1}{\abs{\Tilde{I}^j}}e^{-\frac{\gamma^j}{\abs{\mathcal{O}}}\sqrt{2\ln(\abs{I^j})M}}$$  \\
$\bullet$ if $\tilde{I}^j=I^j$, then 
$$\overline{w}^{ i^\bullet \to j}_{M+1} = \abs{I^j}^{-1}.$$ 
\end{proposition}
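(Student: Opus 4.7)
The strategy is to obtain a closed-form expression for $\overline{G}^{i^\bullet\to j}_M$ in terms of the feature discrepancies $d^{i^\bullet\to j}$, then substitute into the EWA update and bound the resulting softmax explicitly using the gap $\gamma^j$. No probabilistic ingredient is required here, since $\overline{w}$ is built from the \emph{deterministic} gain surrogates $\overline{g}$.

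First I would compute $\overline{G}^{i^+\to j}_M$. Under the balance assumption, each nature $o$ is shown $M/|\mathcal{O}|$ times, so $M^j = n^j M/|\mathcal{O}|$ and $M/M^j = |\mathcal{O}|/n^j$. Since $p^i_m$ depends on $m$ only through $o(m)$, grouping the sum $\sum_{m'=1}^M \overline{g}^{i^+\to j}_{m'}$ by the nature of the presented object yields
\[
\overline{G}^{i^+\to j}_M = M\left(\frac{1}{n^j}\sum_{o\in j}p^i_o - \frac{1}{|J|-1}\sum_{j'\neq j}\frac{1}{n^{j'}}\sum_{o\in j'}p^i_o\right) = M\,d^{i^+\to j},
\]
and by definition of $\overline{g}^{i^-\to j}$ one gets $\overline{G}^{i^-\to j}_M = M\,d^{i^-\to j}$. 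Setting $\lambda := \eta^j M = |\mathcal{O}|^{-1}\sqrt{2\ln(|I^j|)M}$, the EWA formula becomes
\[
\overline{w}^{i^\bullet\to j}_{M+1} = \frac{\exp(\lambda\,d^{i^\bullet\to j})}{\sum_{k\in I^j}\exp(\lambda\,d^{k\to j})}.
\]
In the degenerate case $\tilde{I}^j = I^j$, all $d^{k\to j}$ take the common maximum value and the softmax collapses to $1/|I^j|$, which is exactly the second bullet of the proposition.

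In the case $\tilde{I}^j \neq I^j$, write $d^\star := \max_{k\in I^j} d^{k\to j}$ and factor $\exp(\lambda d^\star)$ from numerator and denominator, so that
\[
\overline{w}^{i^\bullet\to j}_{M+1} = \frac{\exp(\lambda(d^{i^\bullet\to j}-d^\star))}{|\tilde{I}^j|+S},\qquad S := \sum_{k\in I^j\setminus\tilde{I}^j}\exp(\lambda(d^{k\to j}-d^\star)).
\]
By the very definition of $\gamma^j$, every exponent in $S$ is at most $-\gamma^j$, so $0 \leq S \leq (|I^j|-|\tilde{I}^j|)e^{-\lambda\gamma^j}$. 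For $i^\bullet\in\tilde{I}^j$ the deviation $|\overline{w}^{i^\bullet\to j}_{M+1} - |\tilde{I}^j|^{-1}|$ equals $S/(|\tilde{I}^j|(|\tilde{I}^j|+S)) \leq S/|\tilde{I}^j|^2 \leq \frac{1}{|\tilde{I}^j|}\bigl(\tfrac{|I^j|}{|\tilde{I}^j|}-1\bigr)e^{-\lambda\gamma^j}$; for $i^\bullet\notin\tilde{I}^j$ one has $|\overline{w}^{i^\bullet\to j}_{M+1}| \leq e^{-\lambda\gamma^j}/|\tilde{I}^j|$. Taking the worst of the two bounds and substituting $\lambda\gamma^j = (\gamma^j/|\mathcal{O}|)\sqrt{2\ln(|I^j|)M}$ reproduces $E^j_{\mathrm{EWA}}(M)$ exactly.

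There is no real obstacle: the proof is a direct computation. The only bookkeeping step that deserves care is isolating the two regimes $i^\bullet\in\tilde{I}^j$ and $i^\bullet\notin\tilde{I}^j$ so that the two bounds combine into the single factor $\max\{1,|I^j|/|\tilde{I}^j|-1\}$; everything else is just telescoping the gain sum by nature and applying standard softmax manipulations.
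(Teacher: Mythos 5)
Your proof is correct and follows essentially the same route as the paper's: compute $\overline{G}^{i^\bullet\to j}_M = M d^{i^\bullet\to j}$ by grouping the gain sum over natures, substitute into the EWA softmax, and bound the deviation from $w_\infty$ using the gap $\gamma^j$ (your factoring out of $e^{\lambda d^\star}$ and the identity $\tfrac{1}{|\tilde I^j|}-\tfrac{1}{|\tilde I^j|+S}=\tfrac{S}{|\tilde I^j|(|\tilde I^j|+S)}$ is just a slightly cleaner bookkeeping of the paper's $\tfrac{1}{1+x}\ge 1-x$ step). The only nit is the phrase ``every exponent in $S$ is at most $-\gamma^j$'', which should read $-\lambda\gamma^j$; the bound you then state is the correct one.
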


\begin{proof}
    First, let's prove that for all $m\geq 1$, for all $j\in J$ and $i\in I^j$
\begin{equation} \label{formule credit cumulé}
    \overline{G}^{i^\bullet \to j}_M=d^{i^\bullet \to j}  M.
\end{equation}
Indeed,
\begin{align*}
    \overline{G}^{i^+\to j}_M &= \sum_{m=1}^M \overline{g}^{i^+\to j}_m \\
    &= \sum_{m, \text{ }o(m) \in j} \frac{M}{M^j}\times p^i_m - \sum_{j'\neq j}\sum_{m, \text{ }o(m) \in j'} \frac{1}{\abs{J}-1}\times \frac{M}{M^{j'}}\times p^i_m \\
    &= M\Big( \frac{1}{M^j} \sum_{m, \text{ }o(m) \in j}  p^i_m - \frac{1}{\abs{J}-1} \sum_{j'\neq j} \frac{1}{M^{j'}}\sum_{m, \text{ }o(m) \in j'} p^i_m \Big) \\
    &=M\Big( \frac{1}{M^j} \sum_{o\in j}\sum_{m, \text{ }o(m)=o}  p^i_o - \frac{1}{\abs{J}-1} \sum_{j'\neq j} \frac{1}{M^{j'}}\sum_{o\in j'}\sum_{m, \text{ }o(m)=o} p^i_o \Big) 
\end{align*}
Besides, each kind of object is presented the same amount of times, so for all $j\in J$, 
\[M^j=n^j \times \frac{M}{\abs{O}}\]
so we have
\begin{align*}
    \overline{G}^{i^+\to j}_M &= M\Big( \frac{1}{n_j} \sum_{o\in j}  p_i^o - \frac{1}{\abs{J}-1} \sum_{j'\neq j} \frac{1}{n_{j'}}\sum_{o\in j'}p_i^o\Big) \\
    &= d^{i^+\to j} M.
\end{align*}
 Similarly we have
\[\overline{G}^{i^-\to j}_M =  d^{i^-\to j} M.\]
Let $d_{\text{max}}^j:= \max\limits_{i^\bullet \in I^j} d^{i^\bullet \to j}$.

\textbf{Case $\Tilde{I}^j=I^j$}. Then for all $i^\bullet \in I_j$,
\begin{align*}
    \overline{w}^{i^\bullet \to j}_{M+1} = \dfrac{\exp(\eta^j d_{\text{max}}^j  M)}{\sum\limits_{k \in I^j}\exp(\eta^j d_{\text{max}}^j  M)}
    = \frac{1}{\abs{I^j}} .
\end{align*}

\textbf{Case $\Tilde{I}^j \neq I^j$}. Then
 \begin{align*}
  \overline{w}^{i^\bullet \to j}_{M+1} = \dfrac{e^{\eta^j M d^{i^\bullet \to j} }}{\sum\limits_{k\in I^j}e^{\eta^j M d^{k\to j} }}
  \leq \frac{e^{\eta^j M d^{i^\bullet \to j}  }}{\abs{\Tilde{I}^j} e^{\eta^j Md_{\text{max}}^j}} 
  = \frac{1}{\abs{\Tilde{I}^j}}e^{-\eta^j M (d_{\text{max}}^j-d^{i^\bullet \to j})} .
\end{align*}
Thus
\begin{equation} \label{eq_proof}
    0 \leq  \overline{w}^{i^\bullet \to j}_{M+1} \leq \frac{1}{\abs{\Tilde{I}^j}}e^{-\eta^j M (d_{\text{max}}^j-d^{i^\bullet \to j})}.
\end{equation}
Let $i^\bullet \in \Tilde{I}^j$, $ d_{\text{max bis}}^j:= \max\limits_{k\in I^j\setminus \Tilde{I}^j} d^{k\to j}$. 
\begin{align*}
  \overline{w}^{i^\bullet \to j}_{M+1} &= \dfrac{e^{\eta^j Md^{i^\bullet \to j}}}{\sum\limits_{k\in I_j}e^{\eta^j M d^{k\to j}}} \\
  &\geq  \frac{e^{\eta^j Md_{\text{max}}^j}}{\abs{\Tilde{I}^j}e^{\eta^j Md_{\text{max}}^j } + (\abs{I^j} - \abs{\Tilde{I}^j})e^{\eta^j Md_{\text{max bis}}^j }} \\
  &= \frac{1}{\abs{\Tilde{I}^j}}\frac{1}{1+\frac{\abs{I^j}-\abs{\Tilde{I}^j}}{\abs{\Tilde{I}^j}}e^{-\eta^j \gamma^j M }} \\
  &\geq \frac{1}{\abs{\Tilde{I}^j}}\Big(1-\frac{\abs{I^j}-\abs{\Tilde{I}^j}}{\abs{\Tilde{I}^j}}e^{-\eta^j \gamma^j M}\Big) \\
  &= \frac{1}{\abs{\Tilde{I}^j}} - \frac{\abs{I^j}-\abs{\Tilde{I}^j}}{\abs{\Tilde{I}^j}^2}e^{-\eta^j \gamma^j M } ,
\end{align*}
and thanks to ~\eqref{eq_proof}, \[\overline{w}^{i^\bullet \to j}_{M+1} \leq \dfrac{1}{\abs{\Tilde{I}^j}}.\]
Thus 
\[\frac{1}{\abs{\Tilde{I}^j}}- \frac{\abs{I^j}-\abs{\Tilde{I}^j}}{\abs{\Tilde{I}^j}^2}e^{-\eta^j\gamma^j M } \leq  \overline{w}^{i^\bullet \to j}_{M+1} \leq \frac{1}{\abs{\Tilde{I}^j}}.\]
Let $i\in I^j\setminus \Tilde{I}^j$. Then ~\eqref{eq_proof} tells us that
\[ 0 \leq  \overline{w}^{i^\bullet \to j}_{M+1} \leq \frac{1}{\abs{\Tilde{I}^j}}e^{-\eta^j M \gamma^j }.\]

In particular, with $\eta^j=\dfrac{1}{\abs{\mathcal{O}}}\sqrt{2\dfrac{\ln(\abs{I^j})}{M}}$, for all $i^\bullet \in I_j$
\[
\abs{ \overline{w}^{i^\bullet \to j}_{M+1} - w_\infty^{i^\bullet \to j}} \leq  E^j_{\text{EWA}}(M).
\]

\end{proof}

We get the result by combining Proposition \ref{prop bar} and Proposition \ref{prop lim M}.

\subsubsection{Proof of Corollary \ref{cor}}

We have
\begin{align*}
    p^{j,\text{cond}}_{M+1,t}(w^j_{M+1}) -p^{j',\text{cond}}_{M+1,t}(w^{j'}_{M+1}) =& p^{j,\text{cond}}_{M+1,t}(w^j_{M+1})-p^{j,\text{cond}}_{M+1,t}(w^j_{\infty}) \\
    &+ p^{j,\text{cond}}_{M+1,t}(w^j_{\infty}) - p^{j',\text{cond}}_{M+1,t}(w^{j'}_{\infty}) \\
    &+ p^{j',\text{cond}}_{M+1,t}(w^{j'}_{\infty})  -p^{j',\text{cond}}_{M+1,t}(w^{j'}_{M+1}).
\end{align*}

Let $h\in J$.
\begin{align*}
    |p^{h,\text{cond}}_{M+1,t}(w^h_{M+1})&-p^{h,\text{cond}}_{M+1,t}(w^h_{\infty})| \\
    &= \Big|\varphi\Big(\alpha^h + \sum_{i^+ \in I^{h}_+} w^{i^+\to h}_{M+1} \sum_{k=1}^K g_+(k) X^i_{M+1,t-k} -\sum_{i^- \in I^{h}_-} w^{i^-\to h}_{M+1} \sum_{k=1}^K g_-(k) X^i_{M+1,t-k} \Big) \\
    & \hspace{2cm}- \varphi\Big(\alpha^h + \sum_{i^+ \in I^{h}_+} w^{i^+\to h}_{\infty} \sum_{k=1}^K g_+(k) X^i_{M+1,t-k} -\sum_{i^- \in I^{h}_-} w^{i^-\to h}_{\infty} \sum_{k=1}^K g_-(k) X^i_{M+1,t-k} \Big)\Big| \\
    &\leq  L \Big| \sum_{i^+ \in I^{h}_+} (w^{i^+\to h}_{M+1} - w^{i^+\to h}_{\infty} ) \sum_{k=1}^K g_+(k) X^i_{M+1,t-k}  -\sum_{i^- \in I^{h}_-} (w^{i^-\to h}_{M+1} - w^{i^-\to h}_{\infty}) \sum_{k=1}^K g_-(k) X^i_{M+1,t-k} \Big| \\
    &\leq L   \sum_{i^\bullet \in I^{h}} |w^{i^\bullet \to h}_{M+1} - w^{i^\bullet \to h}_{\infty} |  
\end{align*}
because $\varphi$ is $L$-Lipschitz and the variables $X_{M+1,t-k}^i$ are bounded by $1$.
Hence according to Theorem \ref{theorem cvg feasible weight family}, with probability $1-\alpha$, for all $h\in J$, $t\in \{1,\dots,N\}$ we have
\begin{align*}
     |p^{h,\text{cond}}_{M+1,t}(w^h_{M+1})&-p^{h,\text{cond}}_{M+1,t}(w^h_{\infty})| \leq L\abs{I^h}(E^h(N,\alpha) + E^h_{\text{EWA}}(M)).
\end{align*}
Hence with probability $1-\alpha$, for all $j\in J$, $j'\neq j$, $t\in \{1,\dots,N\}$, $o(M+1)\in j$ we have
\begin{align*}
    p^{j,\text{cond}}_{M+1,t}(w^j_{M+1}) -p^{j',\text{cond}}_{M+1,t}(w^{j'}_{M+1}) \geq  p^{j,\text{cond}}_{M+1,t}(w^j_{\infty}) - p^{j',\text{cond}}_{M+1,t}(w^{j'}_{\infty})-  L\sum_{h\in \{j,j'\}} \abs{I^h}(E^h(N,\alpha) + E^h_{\text{EWA}}(M)).
\end{align*}

\subsection{Proofs of Proposition \ref{prop specific han} and Proposition \ref{prop specific han solo}}\label{proof-thm5}

\subsubsection{Proof of Proposition \ref{prop specific han}} \label{proof cvg han}
Here the assumptions of Theorem \ref{theorem cvg feasible weight family} are verified so Theorem \ref{theorem cvg feasible weight family} applies. Let us study the limit family $(w_{\infty}^A, w_{\infty}^B)$. Let us compute the feature discrepancies. We have $\abs{A}=n^c-1$ and $\abs{B}=1$. Here, $\abs{J}=2$ so for all $k\in \{1,\dots, c\}$, $l\in \{1,\dots, n\}$, $d^{f_{k,l}^+\to A}=-d^{f_{k,l}^-\to A}=-d^{f_{k,l}^+\to B} = d^{f_{k,l}^-\to B}$.

Let $k\in \{1,\dots, c\}$ and $l \in \{2,\dots,n\}$. There are $n^{c-1}-1$ objects in $A$ and $1$ in $B$ with the feature $f_{k,1}$, $n^{c-1}$ in $A$ and $0$ in $B$ with the feature $f_{k,l}$. Hence,
\begingroup
\allowdisplaybreaks 
\begin{align*} 
    d^{f_{k,1}^+\to A} &= \frac{n^{c-1}-1}{n^c-1}p - p \\ 
    d^{f_{k,l}^+\to A} &= \frac{n^{c-1}}{n^c-1}p \\ 
    d^{f_{k,1}^-\to A} &=  p - \frac{n^{c-1}-1}{n^c-1}p\\ 
    d^{f_{k,l}^- \to A} &= - \frac{n^{c-1}}{n^c-1}p\\ 
     d^{f_{k,1}^+ \to B} &=  p - \frac{n^{c-1}-1}{n^c-1}p \\ 
    d^{f_{k,l}^+ \to B} &= - \frac{n^{c-1}}{n^c-1}p \\ 
    d^{f_{k,1}^- \to B} &= \frac{n^{c-1}-1}{n^c-1}p - p\\ 
     d^{f_{k,l}^- \to B} &=  \frac{n^{c-1}}{n^c-1}p .
\end{align*}
\endgroup
It is clear that $d^{f_{k,1}^+\to A}< d^{f_{k,l}^+\to A}$ and $ d^{f_{k,l}^- \to A} <  d^{f_{k,1}^-\to A}$. Besides,
\[d^{f_{k,l}^+\to A} < d^{f_{k,1}^-\to A} \iff n > 2.\]
Hence under the condition $n>2$, $\Tilde{I}^A= \{f_{k,1}^-, 1\leq k\leq c\}$ 
and similarly $\Tilde{I}^B= \{f_{k,1}^+, 1\leq k\leq c\}$. So according to
 Theorem \ref{theorem cvg feasible weight family}, $w_{\infty}^A$ is the family such that
  $w^{f_{k,1}^-\to A} = 1/c$ for $k\in \{1,\dots, c\}$ and $w^{i^\bullet\to A} = 0$ for other connections, 
  whereas $w_{\infty}^B$ is the family such that $w^{f_{k,1}^+\to B} = 1/c$ for $k\in \{1,\dots, c\}$ 
  and $w^{i^\bullet \to B} = 0$ for other connections.

Let us look at the conditions under which this is a feasible weight family. When presented with an object having $h$ features $f_{k_1,1}$,..., $f_{k_h,1}$ in common with object $o_B$,   \[p^{A,\text{cond}}_{m,t}(w^A_{\infty}) = \Big(\alpha^A - \frac{1}{c} \sum_{l=1}^h  X^{f_{k_l,1}}_{m,t-1}\Big)_+\]
so under the condition $\alpha^A < 1/c$,
\begin{align*}
    \mathbb{E}[p^{A,\text{cond}}_{m,t}(w^A_{\infty})] &= \alpha^A \mathbb{P}(X^{f_{k_1,1}}_{m,t} = 0,\dots,X^{f_{k_h,1}}_{m,t}=0) \\
    &= \alpha^A (1-p)^h.
\end{align*}
Besides,
 \[p^{B,\text{cond}}_{m,t}(w^B_{\infty}) = \frac{1}{c} \sum_{l=1}^h X^{f_{k_l,1}}_{m,t-1}\]
so
\begin{align*}
    \mathbb{E}[p^{B,\text{cond}}_{m,t}(w^B_{\infty})] &= \frac{h}{c} p.
\end{align*}

Hence $(w_{\infty}^A, w_{\infty}^B)$ is a feasible weight family if and only if we have the two following conditions:
\begin{align}
    \forall 1\leq h \leq c-1, \quad \alpha^A (1-p)^h &>  \frac{h}{c} p \label{cond1 fwf} \\
    \alpha^A (1-p)^c &  \label{cond2 fwf} < p
\end{align}
where \eqref{cond1 fwf} says that $w_{\infty}$ correctly classifies objects in $A$ and \eqref{cond2 fwf} says that that $w_{\infty}$ correctly classifies the object in $B$.

Besides, \eqref{cond1 fwf} is equivalent to 
\begin{equation*}
    \alpha^A (1-p)^{c-1} >  \frac{c-1}{c} p
\end{equation*}
so $w_{\infty}$ is a feasible weight family if and only if $\alpha^A\in \Big(\frac{(c-1)p}{c(1-p)^{c-1}},\frac{p}{(1-p)^c}\Big)$ and $\alpha^A < 1/c$. If so, the safety discrepancy is 
\[\text{Disc}_{\text{safe}}(w_{\infty})= \min \{ \alpha^A (1-p)^{c-1} -  \frac{c-1}{c} p, p -  \alpha^A (1-p)^c\}.\]
Besides,
\[\frac{(c-1)p}{c(1-p)^{c-1}} < \frac{p}{(1-p)^c} \iff \frac{c-1}{c}(1-p) < 1\]
which is always true so the interval is non-empty, and $\frac{1}{c} > \frac{(c-1)p}{c(1-p)^{c-1}}$ if and only if $(c-1)p < (1-p)^{c-1}$. 

To conclude, if $n>2$ and $ (c-1)p < (1-p)^{c-1}$ then there exists $\alpha^A$ such that $w_{\infty}$ is a feasible weight family.

\subsubsection{Proof of Proposition \ref{prop specific han solo}}
Let us compute the feature discrepancies in this new framework. 

Let $k\in \{1,\dots, c\}$ and 
$l \in \{2,\dots,n\}$. There are $n^c-n^{c-1}$ objects in $A$ and 
  $0$ in $B$ without the feature $f_{k,1}$ and $n^c-n^{c-1}-1$ in $A$ and $1$ in $B$
   without the feature $f_{k,l}$. Hence, similarly as in \ref{proof cvg han}, 
\begingroup
\allowdisplaybreaks 
\begin{align*} 
    d^{f_{k,1}\to A} &= \frac{n^{c-1}-1}{n^c-1}p - p \\ 
    d^{f_{k,l}\to A} &= \frac{n^{c-1}}{n^c-1}p \\ 
    d^{\Tilde{f}_{k,1}\to A} &= \frac{n^c-n^{c-1}}{n^c-1}q\\ 
    d^{\Tilde{f}_{k,l} \to A} &= \frac{n^c-n^{c-1}-1}{n^c-1}q- q\\ 
     d^{f_{k,1} \to B} &= p-  \frac{n^{c-1}-1}{n^c-1}p\\ 
    d^{f_{k,l} \to B} &= -\frac{n^{c-1}}{n^c-1}p \\ 
    d^{\Tilde{f}_{k,1} \to B} &= -\frac{n^c-n^{c-1}}{n^c-1}q\\ 
     d^{\Tilde{f}_{k,l} \to B} &= q-\frac{n^c-n^{c-1}-1}{n^c-1}q .
\end{align*}
\endgroup
It is clear that $d^{f_{k,1}\to A} < d^{f_{k,l}\to A}$ and 
$ d^{\Tilde{f}_{k,l} \to A} <  d^{\Tilde{f}_{k,1}\to A} $. Besides, 
\[d^{f_{k,l}\to A} <  d^{\Tilde{f}_{k,1} \to A} \Longleftrightarrow \frac{p}{n-1} < q\]
and 
\[ d^{\Tilde{f}_{k,l}\to B} < d^{f_{k,1} \to B} \Longleftrightarrow q < (n-1)p. \]

Thus, under the hypothesis \[\frac{p}{n-1} < q <  (n-1)p,\]
the neurons having maximal feature discrepancies are neurons $\Tilde{f}_{k,1}$ for $A$, 
and $f_{k,1}$ for $B$. So according to Theorem \ref{theorem cvg feasible weight family}, $w_{\infty}^A$ is the family such that
  $w^{\Tilde{f}_{k,1}\to A} = 1/c$ for $k\in \{1,\dots, c\}$ and $w^{i\to A} = 0$ for other connections, 
  whereas $w_{\infty}^B$ is the family such that $w^{f_{k,1}\to B} = 1/c$ for $k\in \{1,\dots, c\}$ 
  and $w^{i\to B} = 0$ for other connections. With constant weights $(w^A_{\infty},w^B_{\infty})$,
   neurons $A$ and $B$ have the following spiking probabilities.

\begin{center}
    \begin{tabular}{|c|c|c|} \hline
     & $o\in A$ with a $l$ common features with $o_B$  & $o_B$ \\ \hline
    $A$ & $q (c-l)/c$ & $0$ \\ \hline
    $B$ & $p l/c$ & $p$ \\ \hline
    \end{tabular}
\end{center}

Hence with $q > (c-1) p$, the pair $w_{\infty}=(w^A_{\infty}, w^B_{\infty})$ is indeed a feasible weight family.

 Besides, 
\begin{align*}
   \text{Disc}_{\text{safe}}(w_{\infty})&= \min\Big\{\min_{l \in \{0,c-1\}} \Big\{q\frac{c-l}{c} - p\frac{l}{c}\Big\} , p \Big\}
\end{align*}
The second minimum is achieved for $l=c-1$, so
\[\text{Disc}_{\text{safe}}(w_{\infty}) = \min\Big\{\frac{q - p(c-1)}{c}, p \Big\}.\]

\vfill

\end{document}